\newtheorem{theorem}{Theorem}   
\newtheorem{lemma}[theorem]{Lemma}
\newtheorem{proposition}[theorem]{Proposition}
\newtheorem{corollary}[theorem]{Corollary}
\newtheorem*{problem}{Problem}
\theoremstyle{definition}
\newtheorem{definition}[theorem]{Definition}
\newtheorem{remark}[theorem]{Remark}
\definecolor{MyDarkBlue}{rgb}{0,0.08,0.60}
\newcommand{\Spe}{{\rm Spec}}
\newcommand{\Z}{{\mathbb{Z}}}
\newcommand{\lc}{\left\lceil}
\newcommand{\rc}{\right\rceil}
\newcommand{\lf}{\left\lfloor}
\newcommand{\rf}{\right\rfloor}
\renewcommand{\mod}{{\;\rm mod}}
\title
[Integral represenations, holomorphic differentials]{
Integral representations of cyclic groups acting on relative holomorphic differentials 
of deformations of curves with automorphisms}
\date{\today}
\author{Sotiris Karanikolopoulos }
\address{Freie Universit\"at Berlin \\ Institut f\"ur Mathematik, 
Arnimallee 3,
14195 Berlin, Germany}
\email{skaran@zedat.fu-berlin.de}
\author{Aristides Kontogeorgis}
\address{Department of Mathematics, University of Athens\\
Panepistimioupolis, 15784 Athens, Greece
}
\email{kontogar@math.uoa.gr}
\begin{document}
\bibliographystyle{amsplain}

\begin{abstract}
We study integral representations of holomorphic differentials on the 
Oort-Sekiguci-Suwa component of deformations of curves with cyclic group 
actions. 
\end{abstract} 

\thanks{{\bf keywords:} Automorphisms, Curves, Differentials, Numerical
Semigroups.
 {\bf AMS subject classification} 14H37}

\maketitle 
\section{Introduction}
Let $X$ be a nonsingular projective curve defined over an algebraically closed
field $k$ of 
positive characteristic $p$. If the curve $X$ has genus $g \geq 2$ then it is
known that 
the automorphism group $G$ of $X$ is finite. If $p$ divides $|G|$ then the
automorphism 
group of $X$ behaves in a much more complicated way compared to automorphism
group actions 
in characteristic zero. Wild ramification can appear and the structure of
decomposition 
groups and the different is much more complicated \cite{SeL}. 

The representation theory of groups on holomorphic differentials 
has been a useful tool for studying curves with automorphisms even 
in characteristic zero \cite[chap. V.2]{Farkas-Kra}.
In the positive characteristic case, extra  difficulties  arise in the representation theory of the
 automorphism group
which 
now requires the usage of modular representation theory. 
The classical problem 
of determination of the Galois module structure of (poly)differentials, i.e.
the study of $k[G]$-module structure of $H^0(X,\Omega_X^{\otimes n})$ remains
open in 
positive characteristic and only some special cases are understood
\cite{Madden78},\cite{vm},\cite{csm},\cite{karan}.

Also one can consider the deformation problem of curves with automorphisms: Can
one 
find proper, smooth families $\mathcal{X} \rightarrow \Spe R$ over a local ring $R$, with maximal 
ideal $m_R$, acted on
by $G$, such that the 
special fibre $\mathcal{X} \otimes_{\Spe R} R/m_R$ is the original curve?
There are certain obstructions \cite{BertinCRAS},\cite{ChinburgGuralnickHarbater} 
to the existence of such families
especially if $R$ 
is a mixed characteristic ring. As part of this problem one can consider the 
lifting problem to characteristic zero of a curve with automorphisms. 
We will use the following: 
\begin{lemma} \label{free-lemma}
Let $R$ be a local noetherian integral domain with residue field $k$ and quotient 
field $L$. Every finitely generated $R$-module  $M$ with the additional property 
$\dim_k M\otimes_Rk=\dim_L M\otimes_R L=r$  is free of rank $r$.
\end{lemma}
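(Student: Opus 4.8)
The plan is to produce $r$ generators of $M$ via Nakayama's lemma, package them into a surjection from a free module, and then show the kernel vanishes by passing to the quotient field $L$, which amounts to localizing at the generic point of $\Spe R$.

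First I would choose elements $m_1,\dots,m_r\in M$ whose images in $M\otimes_R k$ form a $k$-basis; this is possible precisely because $\dim_k M\otimes_R k=r$. Since $M$ is finitely generated over the local ring $R$ and $m_R$ is the Jacobson radical, Nakayama's lemma guarantees that $m_1,\dots,m_r$ generate $M$ over $R$. Sending the standard basis of $R^r$ to $(m_1,\dots,m_r)$ then defines a surjection $\phi\colon R^r \to M$; writing $K=\ker\phi$ gives a short exact sequence
\[
0 \to K \to R^r \xrightarrow{\ \phi\ } M \to 0 .
\]

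Next I would tensor this sequence with $L$. Because $L$ is the localization of $R$ at the multiplicative set $S=R\setminus\{0\}$, it is flat over $R$, so exactness is preserved:
\[
0 \to K\otimes_R L \to L^r \to M\otimes_R L \to 0 .
\]
The hypothesis $\dim_L M\otimes_R L=r$ forces the surjection $L^r \to M\otimes_R L$ between $r$-dimensional $L$-vector spaces to be an isomorphism, whence $K\otimes_R L=0$.

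The crux is then to conclude that $K$ itself vanishes. Since $K\otimes_R L = S^{-1}K$, the equality $K\otimes_R L=0$ says that every element of $K$ is annihilated by some nonzero element of $R$, i.e. $K$ is a torsion module. But $K$ is a submodule of the free module $R^r$, which is torsion-free because $R$ is an integral domain; hence $K=0$, $\phi$ is an isomorphism, and $M\cong R^r$ is free of rank $r$. The only point requiring genuine care is the interplay of the two hypotheses: the residue-field dimension controls the \emph{number of generators} (surjectivity of $\phi$), while the quotient-field dimension controls the \emph{rank} (injectivity of $\phi$), and these are reconciled through torsion-freeness over the domain $R$. Noetherianity is not actually needed for the argument, only that $M$ is finitely generated.
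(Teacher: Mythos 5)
Your proof is correct. The paper itself gives no argument, simply citing Hartshorne (Lemma II.8.9), and your proposal is precisely the standard proof of that cited lemma: Nakayama's lemma to obtain a surjection $R^r \to M$, flatness of the localization $L=\mathrm{Quot}(R)$ to see the kernel is torsion, and torsion-freeness of $R^r$ over the domain $R$ to conclude the kernel vanishes; your remark that noetherianity is superfluous is also accurate.
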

\begin{proof}
 See \cite[lemma 8.9]{Hartshorne:77}.
\end{proof}
By lemma \ref{free-lemma} the the modules of relative 
polydifferentials
$M_n=H^0(\mathcal{X},\Omega_\mathcal{X}^{\otimes n})$ are  free $R$-modules  that
are 
acted on by $G$.
Aim of this article is to motivate and start the study of the following
%Let $S$ be a principal ideal domain. 

\begin{problem}
 Describe the $R[G]$-module structure of $M_n$. 
\end{problem}
This problem is to be studied within the theory of {\em integral
representations}. 
Traditionally the theory of integral representations considers the
$\Z[G]$-module 
structure but essentially the theory of $R[G]$-module structure is similar 
to the $\Z[G]$ theory if  $R$ 
is a principal ideal domain. For cyclic $p$-groups the possible 
$\Z[G]$-modules are classified \cite{IrvingCyc},\cite{Diederichsen}. 
Here the situation is a little bit easier since we will only  consider integral 
domains that contain a $p$-th root of unity.
In this article we will also consider deformations 
over affine schemes where $R$ is not a principal ideal domain.

The study of integral representations in even  more difficult than the theory
of 
modular representations. Actually one of the main ideas of Brauer in studying 
modular representation theory is to lift the representation to characteristic 
zero using complete rings with special fibre $k$.

An open problem is the Oort conjecture that states that the curve can always
be lifted to 
characteristic zero together with the automorphism group, if the automorphism
group 
is cyclic. This conjecture is proved to be true if the order of the cyclic group
is divided 
at most by $p^2$ \cite{MatignonGreen98}. 
We hope that the $R[G]$-module structure for modules of polydifferentials will shed 
some light to the above liftability problem.

The Galois module structure of the special fibre $M_n\otimes_R R/m_R$ remains
open 
but for Artin-Schreier curves is known \cite{Nak:86},\cite{vm},\cite{karan}.
The relative situation in mixed
characteristic
rings is described by the Oort-Sekiguci-Suwa theory \cite{SOS91} and it is also well
understood, at least for cyclic $p$-groups of order $p$. 

Using the Oort-Sekiguci-Suwa theory Bertin-M\'ezard \cite{Be-Me} gave an explicit such model of
equivariant 
and mixed characteristic deformations of an Artin-Schreier cover with only one 
ramification point. In this article we will study explicitly the $R[G]$-module 
structure of $M_1$ using the Bertin-Mezard deformation as a toy model. 
% 
% We will start our investigation when the local ring $S$ is the ring of Witt vector 
%  with a $p$-th root of unity added to it,i.e $S=W(k)[\zeta]$. We will find 
% a basis for holomorphic differentials for the generic fibre based 
% on a paper of Boseck \cite{boseck}. Then we will modify this basis 
% so that the quotient modulo $m_S$ can be computed. 
% For this case the indecomposable $S[G]$-modules are computed. 
% We also do a comparison to the special and generic fibre and 
% we recover the $k[G]$ and $L[G]$ module structures of the 
% special and generic fibre as expected.
% 
% Then we proceed to the more complicated case of the so called 
% Oort-Sekiguci-Suwa composant $R$  of the versal deformation ring and 
% we are able to compute the indecomposable summands of the 
% relative differentials as $R[G]$-modules. 

In order to express our main theorem we introduce first some notation:
\begin{proposition} \label{Vamod}
We extend the binomial coefficient $\binom{i}{j}$ by zero for values  $i<j$.
For every $a \in \mathbb{N}$, $a\leq p$  consider the $a\times a$ matrix $A_a=(a_{ij})$ 
given by 
\[
 a_{ij}=\binom{i-1}{j-1}.
\]
The matrix $(a_{ij})$ is lower triangular and in characteristic $p$ the matrix  $A_a$ 
has order $p$.   Let $G=\langle \sigma \rangle$ be a cyclic 
group of order $p$. The matrix $A_a$ defines an indecomposable $\Z[G]$ module
of dimension $a$ by sending 
\[
 \rho:\sigma^i \mapsto A_a^i.
\]
\end{proposition}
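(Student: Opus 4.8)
The plan is to reduce all three assertions to a single closed formula for the powers of $A_a$. Lower-triangularity is immediate from the stated convention $\binom{i-1}{j-1}=0$ for $i<j$, and the diagonal entries are $a_{ii}=\binom{i-1}{i-1}=1$, so $A_a$ is unipotent. The heart of the matter is the formula
\[
(A_a^n)_{ij}=\binom{i-1}{j-1}\,n^{\,i-j},
\]
which I would establish by induction on $n$. The inductive step amounts to evaluating $\sum_k \binom{i-1}{k-1}n^{\,i-k}\binom{k-1}{j-1}$; applying the subset-of-a-subset identity $\binom{i-1}{k-1}\binom{k-1}{j-1}=\binom{i-1}{j-1}\binom{i-j}{k-j}$ and then setting $l=k-j$ collapses the sum to $\binom{i-1}{j-1}(n+1)^{\,i-j}$ by the binomial theorem. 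Conceptually, $A_a$ is, up to transpose, the matrix of the shift $f(x)\mapsto f(x+1)$ on polynomials of degree $<a$ in the monomial basis, and the formula merely records $f(x)\mapsto f(x+n)$; I would keep this picture as motivation but carry out the matrix induction as the rigorous route.

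Granting the formula, the order in characteristic $p$ follows at once. For $i>j$ the entry $\binom{i-1}{j-1}p^{\,i-j}$ is divisible by $p$, while for $i=j$ it equals $1$, so $A_a^p=\Id$ over $k$. Since the subdiagonal entry $a_{21}=\binom{1}{0}=1$ is nonzero whenever $a\geq 2$, we have $A_a\neq\Id$, and as $p$ is prime the order is exactly $p$. This makes $\rho\colon\sigma^i\mapsto A_a^i$ a well-defined representation of $G=\langle\sigma\rangle$; I would remark that, although $A_a$ has integer entries, the relation $\sigma^p=1$ is only respected after passing to characteristic $p$, which is the setting of the statement.

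For indecomposability I would work with $N:=A_a-\Id$, which is strictly lower triangular with subdiagonal entries $N_{i,i-1}=\binom{i-1}{i-2}=i-1$. Being $a\times a$ and strictly lower triangular, $N^a=0$, so the point is to show $N^{a-1}\neq 0$. In a strictly lower triangular matrix the $(a,1)$-entry of $N^{a-1}$ receives contributions only from the unique length-$(a-1)$ path $1\to 2\to\cdots\to a$ down the subdiagonal, giving
\[
(N^{a-1})_{a,1}=\prod_{i=2}^{a}(i-1)=(a-1)!\,.
\]
Here the hypothesis $a\leq p$ is exactly what is needed: $(a-1)!$ is invertible in $k$ precisely when $a-1<p$. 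Hence the minimal polynomial of $A_a$ is $(t-1)^a$, of degree equal to the size of the matrix, so $A_a$ is conjugate to a single Jordan block and the associated module is cyclic over $k[\sigma]\cong k[t]/(t-1)^p$, hence indecomposable of dimension $a$. The only genuine obstacle is the power formula and its bookkeeping with the two binomial identities; once that is in place, the order, the single-Jordan-block structure, and the precise role of the bound $a\leq p$ all drop out, so I would invest the care there.
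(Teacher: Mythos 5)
Your proof is correct, but it takes a genuinely different route from the paper's. The paper's proof is the conceptual one you mention only as motivation: it identifies $A_a$ (up to transpose, a point the paper glosses over) as the matrix of the shift $\sigma\colon f(x)\mapsto f(x+1)$ acting on the space $k_{a-1}[x]$ of polynomials of degree at most $a-1$ over a field $k$ of characteristic $p$; the order-$p$ claim is then immediate from $\sigma^p(x)=x+p=x$, and indecomposability follows because the invariant subspace $k_{a-1}[x]^{\langle\sigma\rangle}$ is one-dimensional --- using the standard fact that for a $p$-group in characteristic $p$ every nonzero summand contributes at least one dimension of invariants. You instead carry out the linear algebra directly: the closed formula $(A_a^n)_{ij}=\binom{i-1}{j-1}n^{i-j}$ (your induction via the subset-of-a-subset identity is right), divisibility giving $A_a^p=\Id$ in characteristic $p$, and indecomposability via $(N^{a-1})_{a,1}=(a-1)!\neq 0$, so that the minimal polynomial is $(t-1)^a$ and $A_a$ is a single Jordan block, hence a cyclic module over the local ring $k[t]/(t-1)^p$. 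Each approach has its merits: the paper's is shorter and sets up the polynomial model that is reused immediately afterwards (Proposition~\ref{Va} deforms exactly this action to $\sigma(X)=\zeta X+1$), while yours is self-contained, makes the role of the hypothesis $a\leq p$ completely explicit (invertibility of $(a-1)!$), pins down the full Jordan structure rather than just indecomposability, and flags correctly both the transpose convention and the fact that the $\Z[G]$-module language in the statement only makes strict sense after reduction to characteristic $p$ --- two points the paper leaves implicit.
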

\begin{proof}
Let $k$ be a field of characteristic $p$.
 There is a natural representation of a $p$-group on $k[x]$ by defining $\sigma(x)=x+1$.
Let $k_{a-1}[x]$ be the vector space of polynomials of degree at most $a-1$.
This action with respect to the natural basis $\{1,x,x^2,\ldots,x^{a-1} \}$ has
representation matrix $A_a$. Also the space of invariants 
$k_{a-1}[x]^{\langle \sigma \rangle}$ is one dimensional therefore the 
$G$-module $k_{a-1}[x]$ is indecomposable.  
\end{proof}
For an algebraically closed field $k$ of positive characteristic we consider 
the ring $W(k)[\zeta]$ of Witt vectors with one $p$-root of unity added to it. 
\begin{proposition} \label{Va}
 Let $S$ be an integral domain that is a $W(k)[\zeta]$ algebra. 
%that contains a $p$-th root of unity $\zeta$ and 
Set $\lambda=\zeta-1$,
%{\color{red} and residue field  $k$, the field of proposition \ref{Vamod} }.
 For $a_0,a_1 \in \Z$ $a_1<p$  we  consider the $S$-module  
\[V_{a_0,a_1}:=_{S} \left\langle (\lambda X+1)^i: a_0 \leq i \leq a_0+a_1\right\rangle \subset 
S(\lambda X+1).\] 
 Consider the diagonal integral  representation 
of a cyclic group of order $p$ on $V_{a_0,a_1}$ by defining
\begin{equation} \label{1action}
 \sigma (\lambda X+1)^i =\zeta^i (\lambda X+1)^i.
\end{equation}
After an $\mathrm{GL}_a(\mathrm{Quot}(S))$ base change  the representation 
becomes equivalent to 
\begin{equation} 
\label{Marep}
\rho(\sigma)=\mathrm{diag}(\zeta^{a_0},\zeta^{a_0+1},\ldots, \zeta^{a_0+a_1}) A_a.
\end{equation}
The $S[G]$-representation $V_{a_0,a_1}$ is indecomposable. 
\end{proposition}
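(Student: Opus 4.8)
The plan is to establish the three assertions in turn, the only substantial one being the indecomposability over $S[G]$, since over $\mathrm{Quot}(S)$ the module will turn out to be a sum of distinct characters.

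First I would check that \eqref{1action} does define an action of $G=\langle\sigma\rangle$ of order $p$: as $\zeta\in S$ the assignment is $S$-linear, and $\sigma^p(\lambda X+1)^i=\zeta^{pi}(\lambda X+1)^i=(\lambda X+1)^i$ because $\zeta^p=1$, so $\sigma^p=\Id$. By construction the vectors $(\lambda X+1)^{a_0+j}$, $0\le j\le a_1$, are eigenvectors of $\sigma$ with eigenvalues $\zeta^{a_0+j}$. Since $\lambda=\zeta-1\neq 0$ in the domain $S$, the element $\zeta$ has exact order $p$ in $\mathrm{Quot}(S)$; as $a_1+1\le p$, the $a=a_1+1$ eigenvalues $\zeta^{a_0},\dots,\zeta^{a_0+a_1}$ are pairwise distinct, so $\sigma$ is regular semisimple over $\mathrm{Quot}(S)$.

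For the base change to \eqref{Marep} I would argue as follows. The matrix $A_a$ is lower triangular with all diagonal entries equal to $\binom{i-1}{i-1}=1$, so $DA_a$, with $D=\mathrm{diag}(\zeta^{a_0},\dots,\zeta^{a_0+a_1})$, is lower triangular with diagonal $(\zeta^{a_0},\dots,\zeta^{a_0+a_1})$ and hence has characteristic polynomial $\prod_{j=0}^{a_1}(T-\zeta^{a_0+j})$, with the $a$ distinct roots found above. Therefore $DA_a$ is diagonalizable over $\mathrm{Quot}(S)$ and conjugate there to $D$, i.e. to $\sigma$ acting in the eigenbasis $(\lambda X+1)^{a_0+j}$; this is exactly the asserted equivalence. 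To make the integral picture explicit I would record $V_{a_0,a_1}$ in the basis $h_n=(\lambda X+1)^{a_0}X^n$, $0\le n\le a_1$: since \eqref{1action} is equivalent to $\sigma X=\zeta X+1$, a direct computation produces the integral form \eqref{Marep} in this basis (in the convention of Proposition \ref{Vamod}), while the transition to the eigenbasis, whose $(m,j)$ entry is $\binom{j}{m}\lambda^m$ and whose determinant is $\lambda^{a(a-1)/2}$, lies in $\mathrm{GL}_a(\mathrm{Quot}(S))$ but not in $\mathrm{GL}_a(S)$. The powers of $\lambda$ in the denominators are exactly the reason the diagonal splitting is only rational.

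The indecomposability is the heart of the matter and must be read off the special fibre, since generically the module splits into the $a$ distinct characters $\zeta^{a_0+j}$. I would fix a maximal ideal $\mathfrak m\subset S$ containing $\lambda$; then $p\in\mathfrak m$ (as $\lambda^{p-1}$ is $p$ up to a unit), so $k'=S/\mathfrak m$ is a field of characteristic $p$, and because $\zeta\equiv 1\pmod{\mathfrak m}$ the matrix \eqref{Marep} reduces to $A_a$. Equivalently, from $\bar h_n=X^n$ and $\sigma X=\zeta X+1\equiv X+1$, the reduction $\bar V:=V_{a_0,a_1}\otimes_S k'$ is the module $k'_{a-1}[X]$ with $\sigma(X)=X+1$, which is indecomposable by Proposition \ref{Vamod}, its invariants being one dimensional. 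Now let $e\in\mathrm{End}_{S[G]}(V_{a_0,a_1})$ be an idempotent and write $V_{a_0,a_1}=eV\oplus(1-e)V$. Reducing modulo $\mathfrak m$ gives a decomposition $\bar V=\overline{eV}\oplus\overline{(1-e)V}$ of the indecomposable module $\bar V$, so one summand vanishes, say $\overline{(1-e)V}=0$, i.e. $(1-e)V\subseteq\mathfrak m V$. As $(1-e)V$ is a direct summand of the free module $V_{a_0,a_1}$ it is finitely generated and satisfies $(1-e)V=(1-e)V\cap\mathfrak m V=\mathfrak m\,(1-e)V$; localizing at $\mathfrak m$ and applying Nakayama gives $\big((1-e)V\big)_{\mathfrak m}=0$, and since $V_{a_0,a_1}$ is torsion-free over the domain $S$ this forces $(1-e)V=0$, i.e. $e=1$. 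Thus $\mathrm{End}_{S[G]}(V_{a_0,a_1})$ has only the trivial idempotents and $V_{a_0,a_1}$ is indecomposable.

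The step I expect to be the main obstacle is precisely this last passage: transferring indecomposability from the special fibre to the lattice when $S$ is neither local nor a principal ideal domain, where one may not diagonalize integrally and where a naive dimension count fails because the generic fibre is completely reducible. The device that makes it work is the combination of Nakayama localized at a prime over $\lambda$ with torsion-freeness, together with the observation that a direct summand $(1-e)V$ meets $\mathfrak m V$ in exactly $\mathfrak m\,(1-e)V$; verifying that identity is the point requiring the most care.
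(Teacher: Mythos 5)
Your strategy coincides with the paper's: pass (rationally) to the basis of powers of $X$, where $\sigma X=\zeta X+1$, reduce modulo a prime containing $\lambda$ to land on the modular representation of Proposition \ref{Vamod}, and lift indecomposability from the special fibre. On the lifting step you are in fact more careful than the paper, which disposes of it with the single assertion that ``a decomposable $S[G]$ integral representation should have decomposable reduction as well''; your idempotent argument --- the identity $(1-e)V\cap\mathfrak{m}V=\mathfrak{m}\,(1-e)V$ for a direct summand, Nakayama after localizing at $\mathfrak{m}$, and torsion-freeness over the domain $S$ --- is exactly what is needed to rule out a nonzero summand with zero fibre when $S$ is neither local nor a principal ideal domain, so this part of your proposal is a genuine tightening of the paper's proof.

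There is, however, one step that fails as written, and your own computation exposes it. You note that the transition matrix from the eigenbasis $(\lambda X+1)^{a_0+j}$ to the basis $h_n=(\lambda X+1)^{a_0}X^n$ has determinant $\lambda^{a(a-1)/2}$, hence lies in $\mathrm{GL}_a(\mathrm{Quot}(S))$ but not in $\mathrm{GL}_a(S)$. This means the $h_n$ are \emph{not} an $S$-basis of $V_{a_0,a_1}$ as defined (the $S$-span of the eigenvectors); they span a strictly larger lattice. For the module as literally defined, each $S\,(\lambda X+1)^i$ is a $\sigma$-stable rank-one direct summand (since $\zeta^i\in S$), so that module is visibly decomposable as an $S[G]$-module, and its reduction modulo $\mathfrak{m}$ is the \emph{trivial} $a$-dimensional representation, not $k'_{a-1}[X]$. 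Your identification $\bar V\cong k'_{a-1}[X]$ is therefore valid only for the lattice $\bigoplus_n S\,h_n$ (and, in passing, it also requires $\lambda$ to be a non-unit of $S$, or no maximal ideal contains it). The paper's own proof commits the same conflation: it, too, reduces only after the non-integral change of coordinates. So your proof matches the paper's, defect included; the proposition is true, and both arguments become correct, once $V_{a_0,a_1}$ is taken to be the $S$-span of $(\lambda X+1)^{a_0}X^n$, $0\le n\le a_1$ --- which is the lattice the paper actually uses in the sequel (Corollary \ref{final-basis}) --- and with that reading your reduction step and your Nakayama argument go through verbatim.
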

\begin{proof}
We consider the linear change of coordinates from the $(\lambda X+1)^k$ basis 
to the $X^k$ basis. Observe that this change of basis requires that the elements 
$\lambda$ are invertible. We can work out the conjugation in term of the binomial 
coefficients, but is easier to observe that the action of $\sigma$ on 
polynomials of $X$ is obtained from eq. (\ref{1action}) to be 
$\sigma(X)=\zeta X+1$. 

Let $m$ be the maximal ideal of $W(k)[\zeta]$.
In order to prove that $V_{a_0,a_1}$ is indecomposable we simply take the reduction of 
$S$ modulo  $mS$ and we obtain the indecomposable modular representation of proposition 
\ref{Vamod}. This finishes the proof since a decomposable $S[G]$ integral 
representation should have decomposable reduction as well.
\end{proof}
\begin{remark}
 The isomorphism class of the module $V_{a_0,a_1}$ depends on the equivalence class modulo
 $p$ of $a_0$ and of the length $a_1$. Notice also that the isomorphism class of the reduction 
 of $V_{a_0,a_1}$ modulo the maximal ideal of $S$ depends only on the length $a_1$. 
 Indeed, the $S$-module  isomorphism (but not $G$-module isomorphism
 unless $a_0 \equiv 0 \mod p$)
 \[
  V_{0,a_1} \rightarrow V_{a_0,a_1}
 \]
\[
v \mapsto (\lambda X+1)^{a_0} v 
\]
reduces to the identity modulo the maximal ideal of $S$. This is compatible also with the fact that 
the isomorphism type of a modular representation for the cyclic group depends only on the rank 
of the module. 
\end{remark}

\begin{definition}
 Define by $V_{a}$ the indecomposable integral representation $V_{1,a}$ of proposition \ref{Va}.
It has the matrix representation given in eq. (\ref{Marep}). The module  $V_{a}$ is free of rank 
$a$.
\end{definition}
The main theorem of our article is:
\begin{theorem} \label{main}
Let $\sigma$ be an automorphism of order $p\neq 2$ and conductor $m$ with $m=pq-l$, $1\leq q$, 
$1\leq l \leq p-1$. Let 
\begin{equation} \label{OSS}
R=\left\{
\begin{array}{ll}
  W(k)[\zeta][[x_1,\ldots,x_q]] & \mbox{ if } l=1 \\
 W(k)[\zeta][[x_1,\ldots,x_{q-1}]] & \mbox{ if }
l\neq 1 
\end{array}
\right.
\end{equation}
be the Oort-Sekiguchi-Suwa factor of the versal deformation ring $R_\sigma$. 
The free $R$-module $H^0(\mathcal{X},\Omega_\mathcal{X})$ of relative differentials has the following $R[G]$-structure:
\begin{equation} \label{main-eq}
 H^0(\mathcal{X},\Omega_\mathcal{X})=\bigoplus_{\nu=0}^{p-2}
 V_{\nu}^{\delta_\nu}.
\end{equation}
where 
\[
 \delta_\nu=
\left\{
\begin{array}{ll}
q + \lc \frac{(a+1)l}{p}\rc-\lc \frac{(2+a)l}{p}\rc & \mbox{ if } \nu \leq p-3 \\
q-1 & \mbox{ if } \nu=p-2
\end{array}
\right.
\]
\end{theorem}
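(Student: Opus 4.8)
The plan is to compute the relative differentials explicitly on the Bertin--M\'ezard deformation and then track the cyclic $G=\langle\sigma\rangle$-action. First I would recall the Oort--Sekiguchi--Suwa model: the deformation ring $R$ in eq.~(\ref{OSS}) parametrizes a family $\mathcal{X}\to\Spe R$ whose special fibre is the Artin--Schreier curve with one ramification point of conductor $m$, and whose generic fibre is a $\mu_p$-cover (a Kummer cover) of the projective line. On the generic fibre the automorphism $\sigma$ acts on the coordinate of the Kummer cover by multiplication by $\zeta$, so that a natural eigenfunction $y$ satisfies $\sigma(y)=\zeta y$ and the differentials $y^{i}\,dy/y$ (or equivalently monomials in $y$ times a fixed invariant differential) are simultaneous eigenvectors for the diagonal action. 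The point of the proposition \ref{Va} setup is precisely that the eigenbasis $(\lambda X+1)^i$ of the Kummer model is, after a $\mathrm{GL}_a(\mathrm{Quot}(R))$ base change, the integral indecomposable $V_a$ whose reduction recovers the Jordan block $A_a$ of proposition \ref{Vamod}.

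The key computational step is to write down an explicit $R$-basis of $H^0(\mathcal{X},\Omega_\mathcal{X})$ adapted to the eigenspace decomposition of the Kummer coordinate. Concretely I would express holomorphic differentials as $\omega_{i,j}=y^{i}\,x^{j}\,dx$ (with $y$ the $\mu_p$-eigenfunction and $x$ the coordinate on the base $\mathbb{P}^1$), determine exactly which monomials $y^i x^j$ are everywhere regular by a local analysis at the ramified point and at infinity, and count them. The regularity conditions at the branch point of conductor $m=pq-l$ force inequalities of the form $0\le j\le \lfloor (\text{something involving } i,m)/p\rfloor$; summing the number of admissible $j$ for each residue class $i\equiv \nu \pmod p$ is what produces the ceiling expressions $\lc\frac{(a+1)l}{p}\rc-\lc\frac{(2+a)l}{p}\rc$ together with the genus-type count $q$. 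Since $\sigma$ acts on $\omega_{i,j}$ by $\zeta^{i}$, the differentials with $i\equiv \nu\pmod p$ that occur in a single indecomposable string (those whose eigenvalues $\zeta^{a_0},\dots,\zeta^{a_0+a_1}$ form a consecutive block) assemble, via proposition \ref{Va}, into copies of the indecomposable $V_\nu$; the multiplicity of $V_\nu$ is then exactly the number $\delta_\nu$ of such strings.

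The heart of the argument is therefore the \emph{matching} between the raw eigenvalue multiplicities and the indecomposable summands. I would first compute, for each eigencharacter $\zeta^\nu$, the total $R$-rank $d_\nu$ of the $\zeta^\nu$-eigenspace on the generic fibre (a direct lattice-point count from the regularity inequalities). Then, using that each indecomposable $V_a$ contributes exactly one basis vector to each of the $a$ consecutive eigencharacters $\nu=1,\dots,a$ (so $V_{p-2}$ contributes to all nontrivial characters while shorter $V_a$ contribute only to an initial segment), I would solve the resulting triangular linear system $d_\nu=\sum_{a\ge ?}\delta_a$ for the multiplicities $\delta_a$. Because the system is triangular in the length $a$, the multiplicity of $V_\nu$ is a successive difference of the eigenspace dimensions, and carrying out this subtraction is what yields the two ceiling terms and the boundary value $q-1$ at $\nu=p-2$. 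Indecomposability of each summand, and hence the validity of this decomposition over the possibly non-principal ring $R$, is guaranteed by proposition \ref{Va} together with lemma \ref{free-lemma}, which ensures freeness and lets us reduce questions about the $R[G]$-structure to the reduction modulo the maximal ideal.

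The main obstacle I expect is the local computation of regularity at the wildly ramified point in the \emph{relative} (mixed-characteristic) setting: the uniformizers and the valuation of $dx$ change as one passes between the Kummer generic fibre and the Artin--Schreier special fibre, and one must choose the integral basis (the $(\lambda X+1)^i$ of proposition \ref{Va} rather than naive monomials) carefully so that the differentials remain an $R$-basis, not merely a $\mathrm{Quot}(R)$-basis, across the whole family. Getting the floor/ceiling bookkeeping to land on precisely $\lc\frac{(a+1)l}{p}\rc-\lc\frac{(2+a)l}{p}\rc$ will require a delicate but elementary count of lattice points under a line of slope $l/p$, and verifying that no summand of a forbidden length (in particular none of rank $p-1$ or $p$) appears will need the explicit conductor bound $1\le l\le p-1$ and the assumption $p\neq 2$.
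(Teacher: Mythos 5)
Your first two paragraphs do track the paper's actual route (a Boseck-type basis on the Kummer generic fibre, strings of consecutive powers of $(\lambda X+1)$ assembling into the modules of Proposition \ref{Va}, multiplicities obtained as successive differences), but the step you call ``the heart of the argument'' contains a genuine gap. The eigencharacter multiplicities $d_\nu$ of the generic fibre cannot determine the $R[G]$-lattice structure, even if one grants that the lattice splits into modules of type $V_{a_0,a_1}$: such a module contributes one basis vector to each character in a consecutive block $\zeta^{a_0},\dots,\zeta^{a_0+a_1}$, and that block may start anywhere. For $p=5$, for example, $V_{1,1}\oplus V_{3,1}$ and $V_{1,3}$ have identical character data on the generic fibre (each nontrivial character once) but are non-isomorphic lattices, since their reductions modulo the maximal ideal are two Jordan blocks of size $2$ versus one of size $4$. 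So your system $d_\nu=\sum_a\delta_a$ is underdetermined, and it only becomes triangular once you know that every indecomposable string is anchored at the bottom exponent. That anchoring is exactly what the paper's Lemma \ref{agrr} and Corollary \ref{le-a-g} provide: if $x^N a(x)^a(\lambda X+1)^a\,\omega_0$ is holomorphic then so is $x^N a(x)^a(\lambda X+1)^k\,\omega_0$ for all $0\le k\le a$, a nontrivial floor/ceiling estimate (essentially $\lc\frac{(1+a)l}{p}\rc-\lc\frac{(1+k)l}{p}\rc\le a-k$) which permits the inductive construction of the complements $C_\nu$ with $\Omega_X^a=\bigoplus_{\nu\ge a}L_{\nu,a}(C_\nu)$ and gives $\delta_\nu=\dim C_\nu=\dim\Omega_X^\nu-\dim\Omega_X^{\nu+1}$ directly, with no appeal to character counts. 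Your proposal never isolates, let alone proves, this lowering property.

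Second, you correctly flag the integrality problem --- that the chosen differentials must be an $R$-basis of $H^0(\mathcal{X},\Omega_{\mathcal{X}})$ and not merely a $\mathrm{Quot}(R)$-basis --- as the main obstacle, but you offer no mechanism to overcome it. Lemma \ref{free-lemma} gives freeness of the module; it does not show that your particular vectors generate it over $R$. The paper resolves this with all of Section 3: reducing the model modulo $\pi$ yields the Artin--Schreier equation $X^p-X=x^l/a(x)^p$, which is not in normal form, so one passes to the normalization $R[[\eta]]$ with $\eta^l=\xi$, computes the conductors and relevant divisors there, and produces a Boseck-type basis of the special fibre that coincides with the reduction of the candidate basis of Corollary \ref{final-basis}; only then does one conclude that the candidate set is a free basis of the relative module. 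Without both this special-fibre analysis and the lowering lemma, your outline does not close.
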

We will use the explicit construction of Bertin-M\'ezard \cite{Be-Me} for constructing models 
of the family $\mathcal{X} \rightarrow  \Spe R$. Let $\pi$ be a local uniformizer of $S=W(k)[\zeta]$.
We will employ first the Boseck \cite{boseck} construction for finding a basis for the space of 
differentials for Kummer extensions working with base ring $\mathrm{Quot}(S) \otimes R$. 

Then we will select a basis $B$  so that the $R$-lattice generated by it has full rank and 
gives a well defined reduction modulo the ideal 
 $\pi R$. A detailed analysis for the family of  Artin-Schreier extensions 
given by $\mathcal{X} \times_R \pi R$ proves that the basis $B$ chosen before is indeed a 
basis for $H^0(\mathcal{X},\Omega_{\mathcal{X}})$.
The proof of theorem \ref{main} is given by detailed computations with the basis elements 
chosen. 
As an application we easily  obtain a classical theorem due to Hurwitz on the 
Galois module structure of Kummer extensions.  We also obtain the known modular representation 
structure of the special fiber by reduction of the integral representation.

\section{Explicit deformation theory}
\subsection{Holomorphic differentials of Kummer extensions}
The following theorem will be used for constructing basis on the characteristic zero fibers. 
\begin{theorem}[Boseck]
Let $L$ be a not necessary algebraically closed field of characteristic $p\geq
0$, $(p,n)=1$. 
 Consider the Kummer extension $F$ of $L(x):=F_0$ given by the equation 
$y^n=f(x)$ where $f(x)=a \prod_{i=1}^r p_i(x)^{l_i}$,  the $p_i(x)$ are monic 
irreducible polynomials of degree $d_i$, and the exponents $l_i$ satisfy 
$0 < l_i < n$. The place at infinity of $L(x)$ is assumed to be non ramified in 
$F/F_0$ and this is equivalent to 
%{\color{red} I erased $l$  and fixed the indices} 
$\deg(f)=\sum d_i l_i\equiv 0 \mod n$.
The ramified places in $F/F_0$ correspond to the irreducible polynomials $p_i$ 
and are ramified in extension $F/F_0$ with index $e_i=n/(n,l_i)$. 
We will denote by $g_i$ the number of places of $F$ extending the place $p_i$. 
Set $t:=\deg(f)/n$ and $\lambda_i=e_i l_i/n$. Let $f_i$ be the inertia degree of the
places 
$p_i$ in extension $F/F_0$. For every $\mu=1,\ldots, n-1$ we 
define $m_i^{(\mu)}$, $\rho_i^{(\mu)}$, by the division
\[
 \mu \lambda_i=m_i^{(\mu)} e_i + \rho_i^{(\mu)} \mbox{ where } 
0 \leq \rho_i^{(\mu)} \leq e_i-1.
\]
We set 
\[
 t^{(\mu)}=\frac{1}{n} \sum_{i=1}^r d_i f_i g_i \rho_i^{(\mu)},
\]
and 
\[
 g_\mu(x)=\prod_{i=1}^r p_i(x)^{m_i^{(\mu)}}.
\]
A basis of holomorphic differentials is given by 
\[
 x^\nu g_\mu(x) y^{-\mu} dx,
\]
for $\mu=1,\ldots,n-1$, $0\leq \nu \leq t^{(\mu)}-2$.
\end{theorem}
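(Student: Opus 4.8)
The plan is to verify directly that each proposed differential is holomorphic, that the proposed differentials are linearly independent over $L$, and that their number equals the genus $g$ of $F$; since $\dim_L H^0(F,\Omega_F)=g$, these three facts force the listed set to be a basis. The engine of the computation is the divisor of the one-form $dx$ together with the divisors of $x$ and $y$. Because $(p,n)=1$ every ramified place is tamely ramified, so over the finite place $\mathfrak p_i$ the different exponent equals $e_i-1$; on $\mathbb P^1=\Spe L[x]$ the form $dx$ has divisor $-2\infty$, whence at a place $P\mid \mathfrak p_i$ one gets $v_P(dx)=e_i-1$, while at a place $\infty'$ over infinity (unramified by hypothesis) one gets $v_{\infty'}(dx)=-2$ and $v_{\infty'}(x)=-1$. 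From $y^n=f$ and $v_{\mathfrak p_i}(f)=l_i$ one reads $v_P(y)=e_il_i/n=\lambda_i$ at $P\mid\mathfrak p_i$, and from $\deg f=nt$ one reads $v_{\infty'}(y)=-t$.

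First I would check holomorphy. At a place $P\mid\mathfrak p_i$ the contribution of $g_\mu=\prod_j p_j^{m_j^{(\mu)}}$ is $v_P(g_\mu)=m_i^{(\mu)}e_i$, so
\[
 v_P\!\left(x^\nu g_\mu y^{-\mu}dx\right)=\nu\,v_P(x)+m_i^{(\mu)}e_i-\mu\lambda_i+(e_i-1)=\nu\,v_P(x)+(e_i-1)-\rho_i^{(\mu)}\ge 0,
\]
where the defining division $\mu\lambda_i=m_i^{(\mu)}e_i+\rho_i^{(\mu)}$ with $0\le\rho_i^{(\mu)}\le e_i-1$ makes the last two terms nonnegative and $v_P(x)\ge 0$ on the finite locus. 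At an unramified finite place not dividing $f$ the valuation is $\nu\,v_P(x)\ge 0$. Thus the only genuine constraint comes from the places over infinity, where
\[
 v_{\infty'}\!\left(x^\nu g_\mu y^{-\mu}dx\right)=-\nu-\deg g_\mu+\mu t-2,
\]
which is nonnegative exactly when $\nu\le \mu t-\deg g_\mu-2$. The key identity is that this upper bound equals $t^{(\mu)}-2$: using $m_i^{(\mu)}=\frac{\mu l_i}{n}-\frac{\rho_i^{(\mu)}}{e_i}$ and $e_if_ig_i=n$ one finds $\mu t-\deg g_\mu=\sum_i d_i\rho_i^{(\mu)}/e_i=\tfrac1n\sum_i d_if_ig_i\rho_i^{(\mu)}=t^{(\mu)}$. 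Hence the stated range $0\le\nu\le t^{(\mu)}-2$ is exactly the holomorphy range (note also that this forces $t^{(\mu)}=\mu t-\deg g_\mu\in\Z$).

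Next I would prove independence and match the count. Since $\{1,y^{-1},\dots,y^{-(n-1)}\}$ is an $L(x)$-basis of $F$, differentials with distinct $\mu$ cannot cancel, and for fixed $\mu$ the functions $x^\nu g_\mu$ are $L$-independent, so the whole family is $L$-independent. For the count, Riemann--Hurwitz over $\mathbb P^1$ with the tame different gives $2g-2=-2n+\sum_i(e_i-1)f_ig_id_i$. On the other hand the number of basis elements is $\sum_{\mu=1}^{n-1}(t^{(\mu)}-1)$, and since $\mu\mapsto\mu\lambda_i\bmod e_i$ runs over a complete residue system mod $e_i$ exactly $n/e_i=f_ig_i$ times as $\mu$ ranges over $0,\dots,n-1$ (here $(\lambda_i,e_i)=1$), one gets $\sum_{\mu=1}^{n-1}\rho_i^{(\mu)}=\tfrac{n(e_i-1)}{2}$ and hence $\sum_{\mu=1}^{n-1}t^{(\mu)}=\tfrac12\sum_i d_if_ig_i(e_i-1)$. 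Subtracting $n-1$ reproduces exactly $g$.

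The main obstacle I anticipate is the bookkeeping over infinity, the verification $\mu t-\deg g_\mu=t^{(\mu)}$, and the edge case $t^{(\mu)}=0$: the clean identity $\sum_\mu(t^{(\mu)}-1)=g$ only agrees with the true count $\sum_\mu\max(t^{(\mu)}-1,0)$ once one knows $t^{(\mu)}\ge 1$ for every $\mu\in\{1,\dots,n-1\}$. Since $t^{(\mu)}=0$ iff $\mathrm{lcm}_i(e_i)\mid\mu$, this needs the cover to be genuinely of degree $n$, i.e. $f$ no proper power, which gives $\gcd(n,l_1,\dots,l_r)=1$ and thus $\mathrm{lcm}_i(e_i)=n$; carefully justifying this nonvanishing is the delicate point, after which holomorphy, independence, and the genus count combine to give the asserted basis.
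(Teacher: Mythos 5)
The paper offers no proof of this theorem at all --- it is quoted from Boseck with the citation \cite[II pp.~48-50]{boseck} --- so there is no internal argument to compare against; your architecture (holomorphy via divisor computations, independence from the $L(x)$-basis $1,y,\dots,y^{n-1}$, and a Riemann--Hurwitz count) is the natural one and most of it checks out: the divisor of $dx$ with tame different exponents $e_i-1$, the identity $\mu t-\deg g_\mu=t^{(\mu)}$, and the evaluation $\sum_{\mu=1}^{n-1}t^{(\mu)}=\tfrac12\sum_i d_if_ig_i(e_i-1)$ are all correct.

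There is, however, a genuine gap in the counting step: you tacitly assume that $L$ is the \emph{full} constant field of $F$. Both ingredients of your count --- $\dim_L H^0(F,\Omega_F)=g$ and the Hurwitz formula in the form $2g-2=-2n+\sum_i(e_i-1)f_ig_id_i$ --- fail when the constant field of $F$ is a proper extension $L'\supsetneq L$; then $\dim_L H^0=[L':L]\,g$ and the Hurwitz formula acquires the factor $[F:F_0]/[L':L]$ on the term $-2$. This is not hypothetical under the stated hypotheses: take $n=4$, $f=a\,x^2(x-1)^2$ with $a$ a nonsquare in $L$, $\mathrm{char}\,L\neq 2$. Then $y^4-f$ is irreducible over $L(x)$, so the cover genuinely has degree $4$, yet $\gcd(n,l_1,l_2)=2$, $t^{(2)}=0$, the constant field of $F$ is $L(\sqrt a)$, and your formula would return $g=-1$ (the true genus is $0$). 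The same example refutes the patch in your last paragraph: over a non-algebraically-closed $L$, ``$f$ is no proper power in $L(x)$'' does \emph{not} imply $\gcd(n,l_1,\dots,l_r)=1$, because the constant $a$ can obstruct $f$ from being a power even when all $l_i$ share a divisor with $n$. The correct way to close the argument is to add (or extract from Boseck's setting) the hypothesis that $L$ is algebraically closed in $F$; note that if $d=\gcd(n,l_1,\dots,l_r)>1$ then $\alpha=y^{n/d}/\prod_i p_i(x)^{l_i/d}$ satisfies $\alpha^d=a$, so either the degree drops or the constants grow, i.e.\ this hypothesis is exactly what rules out the pathology. Once it is in place, your worry about $t^{(\mu)}=0$ evaporates with no extra work: since $\max(t^{(\mu)}-1,0)\ge t^{(\mu)}-1$, the listed family has at least $\sum_\mu(t^{(\mu)}-1)=g$ members, while independence inside the $g$-dimensional space $H^0(F,\Omega_F)$ gives at most $g$; equality forces $t^{(\mu)}\ge 1$ for every $\mu$ and simultaneously shows the family is a basis. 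Finally, in the only case the paper actually uses ($n=p$ prime, $l_i<p$, with a rational totally ramified place), the residue field of the totally ramified place equals $L$, which forces the constant field not to grow, so your proof is complete there.
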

\begin{proof}
 See \cite[II pp. 48-50]{boseck}
\end{proof}
% 
% The above theorem can be extended to the polydifferential case as follows:
% \begin{theorem}
%  {\color{red} This is for Swter}
% \end{theorem}

\subsection{The Bertin-M\'ezard model}

Let $k$ be an algebraically closed field of positive characteristic $p>0$.
Consider the Witt ring $W(k)[\zeta]$ extended by $p$-th root of unity, 
and let $L=\mathrm{Quot}(W(k)[\zeta])$. 
Let $\lambda=\zeta-1$. 

We consider the Kummer-extension of rational function field  $L(x)$ defined by
the 
extension 
\[
 (X+\lambda^{-1})^p =x^{-m}+\lambda^{-p}.
\]
Write $m=pq-l$ where $0<l<p-1$.

Set $\lambda X+1=y/x^q$. We have then the model 
\begin{equation} 
\label{def1}
 y^p=(\lambda^p +x^m) x^l.
\end{equation}
Notice that the polynomial $f(x)= (\lambda^p +x^m)x^l$ on the right hand side
has degree
$m+l=qp$ divisible by $p$ so the place at infinity is not ramified. 

More generally we replace $x^q$ by 
\begin{equation} \label{dexq}
 a(x)=x^q + x_1 x^{q-1} + \cdots +x_q,
\end{equation}
where $x_q=0$ if $l\neq 1$.
This gives the  following Kummer extension:
\begin{equation} \label{defff}
 \big( \lambda \xi +a(x) \big)^p=\lambda^p x^l +a(x)^p,
\end{equation}
{  where  $\xi=X a(x)$};
and if we set 
\begin{equation} \label{ydef} 
y=\lambda \xi+a(x)=a(x)(\lambda X+1) 
\end{equation}
 we have:
\begin{equation} \label{def2}
 y^p=\lambda^p x^l +a(x)^p.
\end{equation}
Observe that eq. (\ref{def2})  becomes
eq. (\ref{def1}) if we set $x_1=\cdots=x_q=0$. 

It is known \cite{Be-Me} that  
the  global deformation functor is prorepresentable 
 by a ring $R_\sigma$.  
Bertin-M\'ezard for the case we are studying proved the following 
\begin{theorem}
 Let $\sigma$ be an automorphism of order $p\neq 2$ and conductor $m$, 
with $m=pq-l$, $1\leq q$, $1 \leq l \leq p-1$. The versal 
ring $R_\sigma$ has a formally smooth quotient $R_\sigma \twoheadrightarrow R$  called the 
Oort-Sekiguchi-Suwa (OSS) factor, the ring $R$ is  given  in eq. (\ref{OSS}).
One model of a family over  the OSS factor $R$  is given by setting $\xi=X a(x)$
where $a(x)$ is defined in eq. (\ref{dexq}) and by taking the normalization of the 
fibers given by eq. (\ref{defff}).
\end{theorem}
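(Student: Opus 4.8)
The plan is to follow the Bertin--M\'ezard local-global strategy \cite{Be-Me} and then realize the Oort-Sekiguchi-Suwa factor as an explicit degeneration of Kummer covers. First I would invoke the local-global principle for equivariant deformations \cite{BertinCRAS},\cite{Be-Me}: the quotient $X \to X/\langle \sigma \rangle$ is a $\mathbb{P}^1$ with a single wildly ramified branch point of conductor $m$, and since $(\mathbb{P}^1, \text{one point})$ is rigid, the global deformation functor pro-represented by $R_\sigma$ is controlled entirely by the local deformation functor of the $\sigma$-action at that point. Inside this local functor the Sekiguchi-Suwa theory \cite{SOS91} singles out a distinguished sub-functor, whose deformations arise by lifting the Artin-Schreier cover to a cyclic Kummer cover of order $p$ in mixed characteristic. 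I identify the OSS factor $R$ with the ring pro-representing this sub-functor and the surjection $R_\sigma \twoheadrightarrow R$ with the induced quotient.

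Next I would write down the family explicitly and check that it lives over $R$ with the correct fibres. Starting from eq.~(\ref{defff}) with the substitutions $\xi = X a(x)$ and $y = \lambda \xi + a(x) = a(x)(\lambda X + 1)$ of eq.~(\ref{ydef}), the model (\ref{def2}) reads $y^p = \lambda^p x^l + a(x)^p$; dividing by $a(x)^p$ turns it into
\[
\frac{(1 + \lambda X)^p - 1}{\lambda^p} = \frac{x^l}{a(x)^p}.
\]
On the generic fibre over $\mathrm{Quot}(W(k)[\zeta]) \otimes R$ this is a smooth Kummer cover carrying the diagonal action $\sigma(\lambda X + 1) = \zeta(\lambda X + 1)$ of eq.~(\ref{1action}), that is $\sigma(X) = \zeta X + 1$. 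The key reduction step exploits $\lambda^{p-1} = p \cdot v$ with $v \equiv (p-1)! \equiv -1 \pmod{\lambda}$ by Wilson's theorem, together with $p \mid \binom{p}{k}$ for $1 \le k \le p-1$: the left-hand side reduces modulo $\lambda$ to exactly the Artin-Schreier operator $\wp(X) = X^p - X$, while $\sigma(X) = \zeta X + 1$ reduces to $\sigma(X) = X + 1$. At the origin $a(x) = x^q$ the right-hand side becomes $x^{l - pq} = x^{-m}$, so the special fibre is the Artin-Schreier curve $\wp(X) = x^{-m}$ equipped with $\sigma$ of conductor $m$, as required. Passing to the normalization of (\ref{defff}) then yields a smooth proper relative curve over $R$.

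It remains to count parameters and to establish formal smoothness. The family is parametrized by the monic polynomial $a(x) = x^q + x_1 x^{q-1} + \cdots + x_q$ of eq.~(\ref{dexq}); when $l \neq 1$ one must impose $x_q = 0$ so that $x = 0$ stays on the branch locus and the conductor of the special fibre is preserved, whereas for $l = 1$ the constant term is free. This gives $q$ free parameters when $l = 1$ and $q-1$ when $l \neq 1$, matching the two cases of eq.~(\ref{OSS}). Since the construction produces a genuine smooth family whose coordinates $x_i$ are unobstructed and algebraically independent deformation directions, the OSS factor is formally smooth over $W(k)[\zeta]$, hence the power series ring $R$ of (\ref{OSS}); the classifying $W(k)[\zeta]$-morphism $R_\sigma \to R$ is then surjective (the $x_i$ form part of a regular system of parameters of $R_\sigma$), exhibiting $R$ as a formally smooth quotient.

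The main obstacle is precisely the assertion that these OSS directions are unobstructed and cut out a formally smooth quotient of the stated dimension. This is the technical core of the Sekiguchi-Suwa degeneration: one must show that the coefficients $x_i$ give algebraically independent first-order deformations with no hidden relations, and that reduction modulo $\lambda$ and passage to the normalization both commute with the $G$-action, so that the closed fibre carries exactly the original $\sigma$ of conductor $m$. Controlling the singularities of the affine model (\ref{defff}) and checking flatness of its normalization across the special fibre is the most delicate point, and it is also what forces the constraint $x_q = 0$ in the range $2 \le l \le p-1$.
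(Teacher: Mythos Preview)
The paper does not supply its own proof of this theorem: it is stated as a result due to Bertin--M\'ezard, introduced by ``Bertin-M\'ezard for the case we are studying proved the following'' with a citation to \cite{Be-Me}, and the text then moves on to the Boseck construction. So there is no argument in the paper to compare your proposal against.

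That said, your sketch is a reasonable summary of the ingredients of the Bertin--M\'ezard result: the local-global reduction to a single wild branch point, the Sekiguchi--Suwa degeneration from Kummer to Artin--Schreier via the identity $\big((1+\lambda X)^p-1\big)/\lambda^p \equiv X^p-X \pmod{\lambda}$, and the parameter count on the coefficients of $a(x)$. Where your write-up is thinnest is exactly where you flag it yourself: the formal smoothness of the OSS factor and the flatness of the normalized family are not things one can simply read off from the equations, and your final two paragraphs are closer to a list of what must be checked than an argument that checks it. If you intend this as more than a citation, you would need to either reproduce the relevant deformation-theoretic computations from \cite{Be-Me} (in particular the identification of the tangent space and the vanishing of obstructions along the OSS directions) or explicitly defer to that reference, as the paper does.
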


We will use the Boseck construction for bases for holomorphic 
differentials on the generic fibre. 
Set $\bar{x}=(x_1,\ldots,x_q)$ and consider the polynomial 
\[
f_{\bar{x}}(x)=\lambda^px^l +a(x)^p. 
\]
We consider the decomposition of the polynomial $f_{\bar{x}}(x)$
as a product 
of irreducible polynomials $p_i(x)$  of degree $d_i$
\[
 f_{\bar{x}}(x)=\prod p_i(x)^{l_i}.
\]
Consider the $\bar{x}=(0,\ldots,0)$ case i.e. 
$f_{\bar{x}}(x)=\lambda^p x^l +x^{pq}=x^l(\lambda^p +x^m)$.
In this  case it is clear (compute the discriminant of the 
polynomial $x^m-a$) that $l_i=1$ for irreducible factors of 
$\lambda^p +x^m$ and the only possible factor where 
 $l_1:=l>1$ is $x$.
Since $(l_i,p)=1$ we have that every place $P_i$ corresponding to 
an irreducible factor 
%{\color{red}( In Boseck's theorem  $g_i$ are the number of places that are above each ramified place)}
$p_i$ of $f_{\bar{x}}(x)$ ramifies completely in the 
cover $X \rightarrow \mathbb{P}^1_L$.
Therefore, all inertia degrees are $f_i=1$ and the number of 
places $g_i$ above $P_i$ is $g_i=1$.
Notice that the number of places that are ramified is $m+1$.
%{\color{red}notice that in this case since $\sum_i d_il_i \equiv 0 \mod p$ we should have that $p\mid \sum_i d_i$ }

We can arrive to the same conclusion for the general $f_{\bar{x}}(x)$.
Indeed,
if $l=1$ then the polynomial $f_{\bar{x}}(x)$ is of degree 
$m+1$ and should have $m+1$ distinct roots in an algebraic closure
(otherwise the genus of the curve given in eq. (\ref{def1}) differs 
from the genus of the curve given in eq. (\ref{def2}).
% {\color{red} I do not understand the reason why these 2 curves should 
% have the same genera. }.
% 
% {\color{blue} This is a standard fact in deformation theory of curves. That the genus
% remains constant along fibers.}

For the $l>1$ case notice that $a(x)^p x^{-l}$ is a polynomial and we have
\[
 f_{\bar{x}}(x)=x^l(\lambda^p +a(x)^px^{-l}),
\]
i.e. we have the factor $x$ of multiplicity $l$ and the remaining 
factor  $\lambda^p +a(x)^px^{-l}$ is a polynomial of degree $m=pq-l$
that should have distinct roots, otherwise the genera of the curves defined
in eq.  (\ref{def1}) and (\ref{def2}) are different.

%\begin{wrapfigure}[10]
\begin{center}
 \includegraphics[scale=.7]{./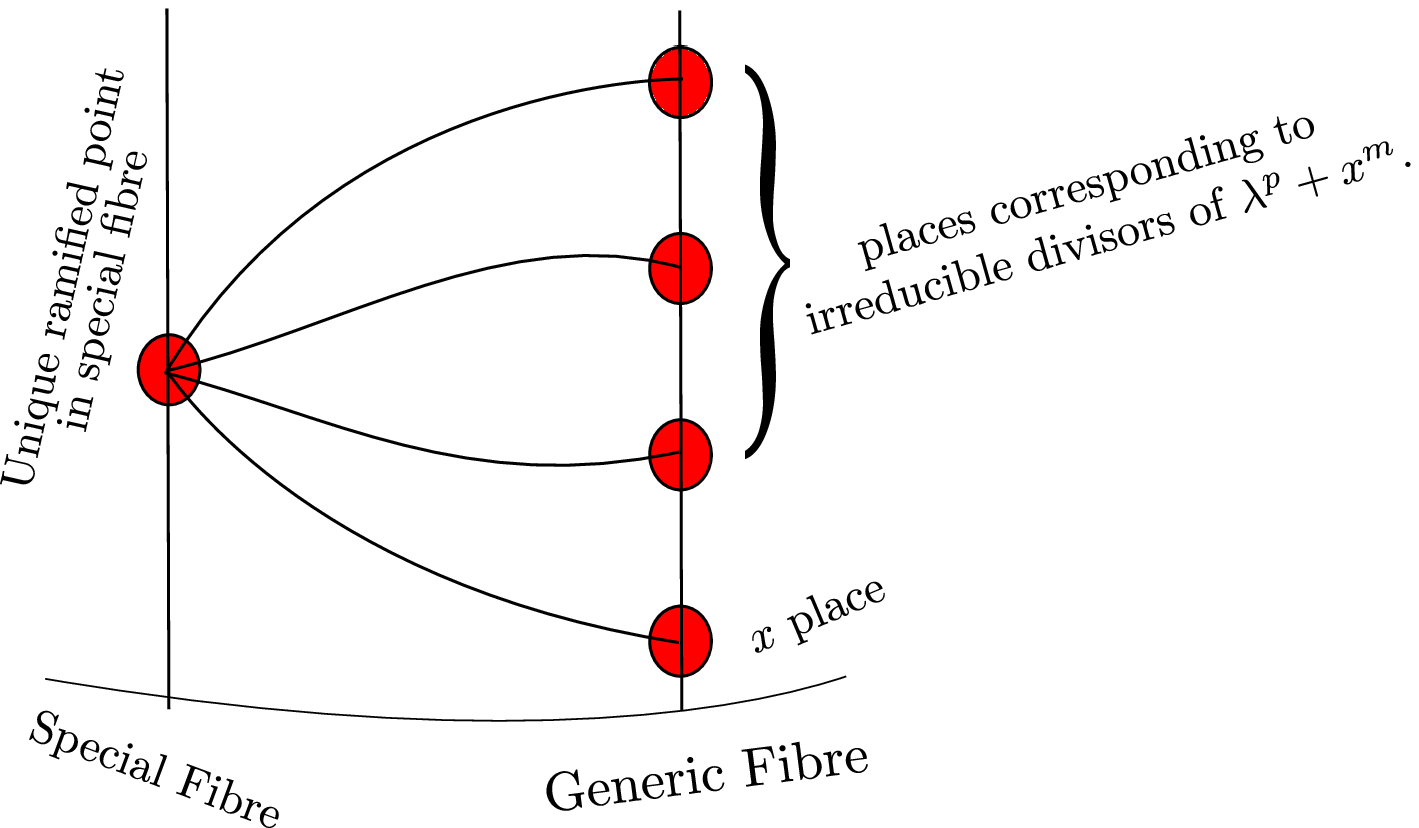}
 % SpecialGeneric.eps: 0x0 pixel, 300dpi, 0.00x0.00 cm, bb=0 -1 231 219
\end{center}
%\end{wrapfigure}

Following the notation of Boseck we have $\lambda_i=e_i l_i/p=l_i$ and 
we also define for every $\mu=1,2,\ldots,p-1$ the integers 
\[
 \mu \lambda_i=m_i^{(\mu)} e_i + \rho_i^{(\mu)},
\]
i.e.
\[
 m_i^{(\mu)}=\lf \frac{\mu l_i}{p} \rf \mbox{ and }
 \rho_i^{(\mu)}=p\left\langle  \frac{\mu l_i}{p} \right\rangle=\mu l_i-\lf
\frac{\mu l_i}{p} \rf p.
\]
This means that for all { $2\leq i \leq r$} such that $l_i=1$ we have: 
\[
 m_i^{(\mu)}=0 \mbox{ and } \rho_i^{(\mu)}=\mu \mbox{ for all } 1\leq i \leq r,
\]
while the same holds for $m_1,\rho_1$, only if $l=1$ i.e., only if 
$m=pq-1$. 
Therefore, the polynomials 
% \[
%  g_\mu(x)=\prod_{i=1}^r p_i(x)^{\mu_i^{(\mu)}}
% \]
% {\color{red} 
\[
 g_\mu(x)=\prod_{i=1}^r p_i(x)^{m_i^{(\mu)}}
\]
%}
that are defined in the work of Boseck have as only factor $x^{m_1^{(\mu)}}$.

We set 

\[
 t^{(\mu)}=\frac{1}{p} \sum_{i=1}^r d_i f_i g_i \rho_i^{(\mu)}=
\frac{m\mu+\mu l-\lf \frac{\mu l}{p} \rf p}{p}=\mu q- \lf \frac{\mu l}{p} \rf.
\]
% {\color{red}
% For $i=1$   we  should have that the
%  first place is totally ramified whatever the value $l$ might be by construction, that is $f_1=1$  and $g_1=1$ and of course $d_1=1$.
% For the other places we have to
% suppose that we adjoin all the roots of $f_{\bar{x}}(x)$ to $k$, then the decomposition of this polynomial is the product of $m+l$ linear polynomials
% of degree one. Then $d_i=1=f_i=g_i$ for all $i\geq1$. Notice also that in this case we should have that 
% \begin{equation}\label{as}
%  \sum_i l_i \equiv 0 \mod p
% \end{equation}
% Notice finally that the condition $m=pq -l$ is equivalent from eq. \ref{as}, with the condition $l_i =1=d_i$  whenever $i\geq2$. Also $r=m+1$. 
% }

\[
t^{(\mu)}=\frac{1}{p} \sum_{i=1}^r d_i f_i g_i \rho_i^{(\mu)}=
\frac{m\mu+\mu l-\lf \frac{\mu l}{p} \rf p}{p}=\mu q- \lf \frac{\mu l}{p} \rf.
\]
The set of holomorphic differentials are given by 
\begin{equation} \label{BoseckBasis}
 x^{\nu +\lf \frac{l\mu}{p}\rf}y^{-\mu} dx, \mbox{ where } 1\leq \mu \leq p-1, 
0 \leq \nu \leq t^{(\mu)}-2=\mu q- \lf \frac{\mu l}{p} \rf-2.
\end{equation}

Let us write $y=a(x) (\lambda X +1)$, 
set  $N:=\nu +\lf \frac{l\mu}{p}\rf$ and $a:=p-1-\mu$. We have proved the 
\begin{proposition} \label{aa6}
 The set of differentials of the form
 \begin{equation} \label{a-basis}
 x^{N}   a(x)^a \frac{(\lambda X+1)^{a}}{ a(x)^{p-1} 
(\lambda X+ 1)^{p-1}} dx,
\end{equation}
where  
\begin{equation} \label{range-cond}
0 \leq a  < p-1 \mbox{  and  } l-\lc\frac{(1+a)l}{p} \rc \leq N \leq (p-1-a) q-2,
\end{equation} 
forms a basis of holomorphic differentials.
\end{proposition}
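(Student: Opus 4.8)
The plan is to recognize that Proposition \ref{aa6} is simply a change-of-variables rewriting of the Boseck basis in eq. (\ref{BoseckBasis}), so no new geometric input is needed beyond the Boseck theorem already invoked. First I would substitute $y=a(x)(\lambda X+1)$ into the Boseck differential $x^{\nu+\lf l\mu/p\rf}y^{-\mu}dx$. Writing $y^{-\mu}=a(x)^{-\mu}(\lambda X+1)^{-\mu}$ and setting the integer $a=p-1-\mu$, so that $-\mu=a-(p-1)$, I would split $a(x)^{-\mu}=a(x)^a/a(x)^{p-1}$ and likewise $(\lambda X+1)^{-\mu}=(\lambda X+1)^a/(\lambda X+1)^{p-1}$. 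Together with the abbreviation $N=\nu+\lf l\mu/p\rf$, this turns $x^{\nu+\lf l\mu/p\rf}y^{-\mu}dx$ into exactly the expression displayed in eq. (\ref{a-basis}), so the two families of differentials coincide term by term.

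Second, I would translate the index ranges. The Boseck constraint $1\leq\mu\leq p-1$ becomes $0\leq a\leq p-2$, that is $0\leq a<p-1$, which is the first condition in (\ref{range-cond}). For the upper bound on $N$, the Boseck bound $\nu\leq\mu q-\lf\mu l/p\rf-2$ gives, upon adding $\lf l\mu/p\rf$ to both sides, precisely $N\leq\mu q-2=(p-1-a)q-2$, matching the stated upper bound.

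The only genuinely computational point is the lower bound, so that is where I would be careful. Since $\nu\geq 0$ is equivalent to $N\geq\lf l\mu/p\rf$, I would rewrite this floor using $\mu=p-1-a$: here $l\mu=lp-l(1+a)$, hence $l\mu/p=l-(1+a)l/p$, and the reflection identity $\lf l-x\rf=l-\lc x\rc$ for integer $l$ (equivalently $\lf -x\rf=-\lc x\rc$) yields $\lf l\mu/p\rf=l-\lc(1+a)l/p\rc$. This is exactly the stated lower bound $l-\lc(1+a)l/p\rc\leq N$. With both the differentials and their index ranges matched to those of the Boseck theorem, the proposition follows. I do not anticipate a real obstacle beyond bookkeeping: the entire argument is a substitution plus the floor/ceiling reflection, and the delicate step is merely keeping the two meanings of $a$ (the exponent $a=p-1-\mu$ versus the polynomial $a(x)$) and the floor/ceiling conversion straight.
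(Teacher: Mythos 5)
Your proposal is correct and is essentially the paper's own proof: the paper obtains Proposition \ref{aa6} from the Boseck basis (\ref{BoseckBasis}) by exactly your substitution $y=a(x)(\lambda X+1)$, $a:=p-1-\mu$, $N:=\nu+\lf \frac{l\mu}{p}\rf$, with the index ranges translating as you describe. Your explicit check of the lower bound via $\lf l-\frac{(1+a)l}{p}\rf = l-\lc \frac{(1+a)l}{p}\rc$ is the one piece of bookkeeping the paper leaves implicit, and you have done it correctly.
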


This base is not suitable for taking the reduction modulo the maximal ideal of
the ring  $S=W(k)[\zeta]$. We will select a different basis so that it has good reduction.
 
\begin{lemma}\label{agrr}
 For every holomorphic differential  in the basis given in proposition  \ref{aa6}
and for $0 \leq k \leq a$
the  differential 
\begin{equation} \label{want}
  x^{N}   a(x)^a \frac{(\lambda X+1)^{k}}{ a(x)^{p-1} 
 (\lambda X+ 1)^{p-1}} dx
 \end{equation}
is also holomorphic.
\end{lemma}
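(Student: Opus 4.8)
The plan is to eliminate $(\lambda X+1)$ in favour of $y$, expand the resulting power of $a(x)$, and reduce the whole question to the Boseck holomorphy condition for the monomial differentials $x^M y^{-\mu'}\,dx$. Using $y=a(x)(\lambda X+1)$ from eq. (\ref{ydef}) and setting $\mu=p-1-a$, $\mu'=p-1-k$, a direct substitution collapses the factors $a(x)^a$ and $(\lambda X+1)^k$ and turns the differential (\ref{want}) into
\[
x^{N}a(x)^{\mu'-\mu}y^{-\mu'}\,dx ,
\]
where the hypothesis $0\le k\le a$ gives $\mu'-\mu=a-k\ge 0$ together with $1\le\mu\le\mu'\le p-1$; note that for $k=a$ this is exactly the basis element $x^N y^{-\mu}\,dx$ of proposition \ref{aa6}.

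Next I would expand $a(x)^{\mu'-\mu}$ as a polynomial in $x$, so that the differential becomes $\sum_M c_M\,x^M y^{-\mu'}\,dx$ with $c_M\in R$ and $M=N+j$, where $j$ runs over the degrees of the monomials of $a(x)^{\mu'-\mu}$. Since $a(x)$ is monic of degree $q$, the largest occurring value is $j=q(\mu'-\mu)$; moreover when $l\neq 1$ we have $x_q=0$, so $x\mid a(x)$ and hence $j\ge\mu'-\mu$, whereas for $l=1$ the constant term may be nonzero and only $j\ge 0$ is guaranteed. By the Boseck basis (\ref{BoseckBasis}) the monomial $x^M y^{-\mu'}\,dx$ is holomorphic precisely when $\lf l\mu'/p\rf\le M\le \mu'q-2$, so it suffices to verify these two inequalities for every $M$ that can occur; a linear combination of holomorphic differentials is again holomorphic.

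For the upper bound I would use the right half of (\ref{range-cond}), namely $N\le\mu q-2$, to obtain $M\le N+q(\mu'-\mu)\le\mu'q-2$. For the lower bound I would first rewrite the left half of (\ref{range-cond}): since $1+a=p-\mu$ one has $l-\lc(1+a)l/p\rc=\lf\mu l/p\rf$, so the range condition reads $N\ge\lf\mu l/p\rf$. When $l=1$ this already gives $M\ge N\ge 0=\lf\mu'/p\rf$. When $l\neq 1$ we have $M\ge N+(\mu'-\mu)$, and the claim follows from the elementary estimate $\lf\mu'l/p\rf-\lf\mu l/p\rf\le\mu'-\mu$, which holds because $l<p$ forces each of the $\mu'-\mu$ unit steps of $\mu\mapsto\lf\mu l/p\rf$ to increase the floor by at most one; combining with $N\ge\lf\mu l/p\rf$ gives $M\ge\lf l\mu'/p\rf$.

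The one genuine subtlety—the \emph{main obstacle}—is this lower bound, since the size of the smallest occurring exponent $M$ depends on whether $a(x)$ has a nonzero constant term, which is exactly the distinction $l=1$ versus $l\neq 1$. Once the range condition is rewritten as $N\ge\lf\mu l/p\rf$ and the floor estimate above is available, both cases close and every occurring monomial $x^M y^{-\mu'}\,dx$ is a holomorphic Boseck differential; hence their $R$-combination (\ref{want}) is holomorphic.
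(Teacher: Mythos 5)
Your proof is correct and is essentially the paper's own argument in different coordinates: the substitution $y=a(x)(\lambda X+1)$ turns your monomials $x^{M}y^{-\mu'}dx$ into exactly the differentials of proposition \ref{aa6} with exponent $k$ (which is how the paper phrases the reduction), both proofs then check the same degree window with the same upper bound and the same $l=1$ versus $l\neq 1$ case split, and your floor estimate $\lfloor \mu' l/p\rfloor-\lfloor \mu l/p\rfloor\le \mu'-\mu$ is precisely the paper's inequality (\ref{prol1}) rewritten via the identity $l-\lceil (1+a)l/p\rceil=\lfloor (p-1-a)l/p\rfloor$. The only minor difference is that you prove that inequality by a clean telescoping unit-step argument ($l<p$ makes each step of $\mu\mapsto\lfloor \mu l/p\rfloor$ increase the floor by at most one), whereas the paper uses a slightly longer direct computation with remainders.
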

\begin{proof}
For $k=a$ we have nothing to show. Let us assume that $0\leq k < a$. 
 The differentials of the form 
\begin{equation} \label{have}
  x^{N'}   a(x)^{k} \frac{(\lambda X+1)^{k}}{ a(x)^{p-1} 
(\lambda X+ 1)^{p-1}} dx
\end{equation}
for $ l-\lc\frac{(1+k)l}{p} \rc \leq N' \leq (p-1-k) q-2$ are holomorphic. We will show that 
every differential given in eq. (\ref{want}) is written as a linear combination 
of holomorphic differentials given in (\ref{have}).

Indeed we have to show that we can select $\lambda_{N'}$ such that 
\[
 \sum_{N'=l-\lc\frac{(1+k)l}{p}\rc }^{(p-1-k) q-2} \lambda_{N'} 
 x^{N'}   a(x)^{k} 
=
x^{N}   a(x)^a, 
\]
 or equivalently that 
\begin{equation} \label{le-a}
 \sum_{N'=l-\lc\frac{(1+k)l}{p}\rc }^{(p-1-k) q-2} \lambda_{N'} 
 x^{N'}  
=
x^{N}   a(x)^{a-k}. 
\end{equation}
We now observe that  $a(x)^{a-k}$ is a polynomial of degree $q(a-k)$ and the possible
 values of $N$ are in the range given by (\ref{range-cond}). This means that 
the in  right hand side of eq. (\ref{le-a}) appear monomials  of degree
  at most $N_1=N+q(a-k)$ where $N_1$ satisfies
  \[
     N_1 \leq (p-1-a) q-2+q(a-k)=(p-1-k) q-2.
  \]
and at least  
\[
 N_2:=\left\{ 
\begin{array}{ll}
 N  &\mbox{ if } l=1\\
N+a-k & \mbox{ if } l>1
\end{array}
\right.
\]
The distinction in the two cases appears since in the $l>1$ case the polynomial $a(x)$ has 
zero constant term.

For $l=1$ it is enough to prove that  
\[
 l-\lc\frac{(1+k)}{p}\rc \leq  l-\lc\frac{(1+a)}{p} \rc
\]
which is immediate since $1\leq 1+k \leq 1+a \leq p-1$. 

For the $l>1$ we have to prove that 
\begin{equation} \label{prol1}
  \lc\frac{(1+a)l}{p} \rc-\lc\frac{(1+k)l}{p} \rc \leq (a-k).
\end{equation}
Write $a=k+t$ for some $t>0$. 
Then we have
\[
  (1+a)l=\pi_ap+v_a,\;\; (1+k)l=\pi_kp+v_k \mbox{ where } 0\leq v_a,v_k < p, 
\]
so 
\[
 (1+a)l=\pi_kp+v_k +tl
\]
and 
\[
 v_k+tl=p \lf \frac{v_k+tl}{p}\rf+v \mbox{ for some } 0\leq v <p.
\]
This way we see that 
\[
 \pi_a=\pi_k+  \lf \frac{v_k+tl}{p}\rf, 
\]
% {\color{red} these are floor functions while in eq.  (\ref{prol1}) we have ceiling functions. You should write here
% that: Since $\lf x_1 \rf -\lf x_2 \rf =\lc x_1 \rc -\lc x_2 \rc$ , for every $x_i$, with $i=1,2$, then....
% 
% (I also do not think that the following equation in red is not necessary )
% 
% and (\ref{prol1}) is now written
% 
% \[
%  \lf \frac{v_k+tl}{p}\rf = \pi_k+\lf \frac{v_k+tl}{p}\rf+ \lf\frac{v}{p} \rf-\pi_k+\- \lf \frac{v_k}{p}\rf \leq t.
% \]
% }
but  
\[
 \lf \frac{v_k+tl}{p}\rf \leq \lf \frac{v_k+tp}{p}\rf =t+ \lf \frac{v_k}{p}\rf=t,  
\]
% {\color{red} \[
%  \lf \frac{v_k+tl}{p}\rf < \lf \frac{v_k+tp}{p}\rf =t+ \lf \frac{v_k}{p}\rf=t,  
% \]
% }
therefore (\ref{prol1}) holds.
% 
% \[
% l-\lc\frac{(1+a)l)}{p} \rc+q(a-k) \leq 
% \]
% So we have to prove that 
% \[
%  l-\lc\frac{(1+k)l)}{p}\rc \leq  l-\lc\frac{(1+a)l)}{p} \rc+q(a-k)
% \]
% or equivalently 
% \begin{equation} \label{le-bb}
%  \lc\frac{(1+a)l)}{p} \rc-\lc\frac{(1+k)l)}{p} \rc \leq q(a-k).
% \end{equation}
% For this we write
% \[
%  (1+a)l=\pi_ap+v_a,\;\; (1+k)l=\pi_kp+v_k \mbox{ where} 0\leq v_a,v_k < p, 
% \]
% therefore (\ref{le-bb}) is now transformed to 
% \[
%  \pi_a-\pi_k \leq q\frac{(\pi_a-\pi_k)p+v_a-v_k}{l}
% \]
% which is equivalent to 
% \[
%  0\leq (\pi_a-\pi_k)(pq-l)  +q(v_a-v_k).
% \]
% If $\pi_a-\pi_k=0$ then $v_a \geq v_k$ and the inequality holds, and if $\pi_a-\pi_k>0$
% then 
\end{proof}

\begin{corollary} \label{le-a-g}
 For every $p(x) \in R[x]$ so that the  differential 
$p(x) a(x)^a \frac{(\lambda X+1)^a}{a(x)^{p-1} (\lambda X+1)^{p-1}} dx$ is 
holomorphic,
the differentials
$p(x) a(x)^a \frac{(\lambda X+1)^k}{a(x)^{p-1} (\lambda X+1)^{p-1}} dx$
are also holomorphic.
\end{corollary}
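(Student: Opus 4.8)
The plan is to deduce the corollary from Lemma \ref{agrr} by pure linearity, the only real work being to show that the polynomial $p(x)$ is supported precisely on the monomials $x^N$ occurring in the basis of Proposition \ref{aa6}. First I would record the action of $\sigma$: it fixes $x$ and hence $a(x)$, and it multiplies $\lambda X+1$ by $\zeta$. Consequently every differential of the shape $q(x)\,a(x)^a\frac{(\lambda X+1)^a}{a(x)^{p-1}(\lambda X+1)^{p-1}}dx$, with $q(x)$ a polynomial, is an eigenvector of $\sigma$ for the eigenvalue $\zeta^{a+1}$, independently of the choice of $q$. Working over $\mathrm{Quot}(S)\otimes R$, where $\lambda$ is invertible, the space of holomorphic differentials splits into $\sigma$-isotypic components, and the basis of Proposition \ref{aa6} respects this splitting: since the eigenvalues $\zeta^{a+1}$ for $0\le a<p-1$ are pairwise distinct, for each fixed such $a$ the differentials $x^N a(x)^a\frac{(\lambda X+1)^a}{a(x)^{p-1}(\lambda X+1)^{p-1}}dx$ with $N$ in the range \eqref{range-cond} form a basis of the $\zeta^{a+1}$-isotypic component.

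Next, since the given differential $p(x)\,a(x)^a\frac{(\lambda X+1)^a}{a(x)^{p-1}(\lambda X+1)^{p-1}}dx$ is holomorphic and lies in the $\zeta^{a+1}$-isotypic component, it is an $R$-linear combination of the basis elements of that component. Cancelling the common nonzero factor $a(x)^a\frac{(\lambda X+1)^a}{a(x)^{p-1}(\lambda X+1)^{p-1}}dx$ then yields an expansion $p(x)=\sum_N c_N x^N$ with $N$ running only over the range \eqref{range-cond}. Finally I would apply Lemma \ref{agrr} termwise: for each such $N$ and every $0\le k\le a$ the differential $x^N a(x)^a\frac{(\lambda X+1)^k}{a(x)^{p-1}(\lambda X+1)^{p-1}}dx$ is holomorphic, so the identical linear combination
\[
\sum_N c_N\, x^N a(x)^a\frac{(\lambda X+1)^k}{a(x)^{p-1}(\lambda X+1)^{p-1}}dx
= p(x)\,a(x)^a\frac{(\lambda X+1)^k}{a(x)^{p-1}(\lambda X+1)^{p-1}}dx
\]
is a combination of holomorphic differentials, hence holomorphic, which is exactly the assertion.

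The step I expect to be the main obstacle is the claim that holomorphy of the original differential forces $p(x)$ to be supported on the in-range monomials, i.e. that the isotypic basis of Proposition \ref{aa6} is genuinely complete. The point to check is that no cancellation of poles among out-of-range monomials can create a holomorphic differential: the monomials $x^N$ contribute strictly monotone pole orders at $x=0$ (increasing as $N$ decreases) and at the place at infinity (increasing as $N$ grows), so in any sum the extreme exponents cannot cancel, and an out-of-range term necessarily leaves a genuine pole. I would also flag the base-ring subtlety that the $\sigma$-isotypic decomposition is clean over $\mathrm{Quot}(S)\otimes R$, where $\lambda$ is a unit, but not over $R$ itself where $\lambda$ lies in the maximal ideal; since holomorphy is tested on the generic fibre this causes no difficulty, and the resulting coefficients $c_N$ may be cleared of denominators afterwards.
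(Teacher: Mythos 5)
Your proof is correct and follows essentially the same route as the paper, which states this corollary without a separate proof as a linearity consequence of Lemma \ref{agrr}: the eigenspace argument you use to show $p(x)$ is supported on the admissible monomials is precisely the grading $\Omega_X=\bigoplus_a \Omega_X^a$ by powers of $(\lambda X+1)$ that the paper introduces immediately after the corollary. Your extra worry about pole cancellation is superfluous, since the completeness of the fixed-$a$ differentials inside their eigenspace already follows from Proposition \ref{aa6} being a basis together with the distinctness of the eigenvalues $\zeta^{a+1}$, $0\leq a\leq p-2$.
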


For $0 \leq a \leq p-2$ we consider the set of admissible $N$ i.e. $N$ that satisfy 
the inequalities $ \underline{N_a} \leq N \leq \overline{N_a}$, where 
$  \underline{N_a}= l-\lc\frac{(1+a)l}{p} \rc$ and $\overline{N_a}=(p-1-a) q-2$.
For example $\underline{N_{p-2}}= 0$ and $\overline{N_{p-2}}=q-2$.

We consider the space of holomorphic differentials as a graded space with grading given 
by the exponent of $(\lambda X+1)^a$, i.e.
\[
 \Omega_X =\bigoplus_{0 \leq a \leq p-2} \Omega_X^a.
\]
Notice that every space $\Omega_X^a$ has dimension 
\begin{equation} \label{dimO}
 \dim \Omega_X^a=\overline{N_a} -\underline{N_a}+1=(p-1-a) q-2-l+\lc\frac{(1+a)l}{p}\rc+1.
\end{equation}
 Corollary \ref{le-a-g} implies that  for every $0 \leq k  \leq a \leq p-2$ 
there are linear injections 
\[
L_{a,k}:\Omega_X^{a} \rightarrow \Omega_X^{k}, 
\]
so that 
\[
 L_{a,k}\left(p(x) a(x)^a \frac{(\lambda X+1)^a}{a(x)^{p-1} (\lambda X+1)^{p-1}} dx \right)=
p(x) a(x)^a \frac{(\lambda X+1)^k}{a(x)^{p-1} (\lambda X+1)^{p-1}} dx.
\]
% {\color{red} Ar. 
% I cannot the injection above, that is I cannot see how the image $L_{a,k} (\Omega_X ^{i})$, with $i>k$ would be inside $\Omega_X ^k$.
% I can only see the injection:$ p(x) a(x)^a \frac{(\lambda X +1)^a}{[]}\longmapsto P(x) a(x)^k \frac{(\lambda X +1)^a}{[]}$, where $\deg p(x)=N(a), \deg P(x)=N'(k)$.}  
Define 
\[
C_{p-2}:=\Omega_X^{p-2}.
\]
We define $C_{p-3}$ so that  $\Omega^{p-3}_X$  is a direct sum in the category 
of vector spaces:
\[
 \Omega^{p-3}_X=L_{p-2,p-3}(C_{p-2}) \oplus C_{p-3}.
\]
We proceed inductively. We define $C_{p-4}$ so that 
\[
 \Omega^{p-4}_X=L_{p-2,p-4}(C_{p-2}) \oplus L_{p-3,p-4}(C_{p-3}) \oplus C_{p-4}.
\]
 More generally:
\begin{equation} \label{L-sum}
 \Omega_X^{p-k}=\bigoplus_{\nu=p-k+1}^{p-2} L_{\nu,p-k}(C_\nu) \oplus C_{p-k}=
\bigoplus_{\nu=p-k}^{p-2} L_{\nu,p-k}(C_\nu).
\end{equation}
After setting $a=p-k$ we have 
\[
 \Omega_X^{a}=
\bigoplus_{\nu=a}^{p-2} L_{\nu,a}(C_\nu).
\]
We now write
\[
 \Omega_X=\bigoplus_{a=0}^{p-2}  \bigoplus_{\nu=a}^{p-2}
L_{\nu,a} (C_\nu) 
=
\bigoplus_{\nu=0}^{p-2}  \bigoplus_{a=0}^{\nu}
L_{\nu,a} (C_\nu)
.
\]
Fix a basis 
\[
 \left\{f_1 \frac{(\lambda X+1)^\nu}{a(x)^{p-1}(\lambda X+1)^{p-1}}dx,\ldots,
f_{\dim C_\nu} \frac{(\lambda X+1)^\nu}{a(x)^{p-1}(\lambda X+1)^{p-1}}dx \right\}
\]
of $C_\nu$, where $f_i$ are polynomials in $R[x]$. 
%{\color{red} you must say what these $f_i$ are (that is polynommials in the $x$ variable 
%with coefficients from ?)}
Then the set 
\[
 \left\{f_\mu \frac{(\lambda X+1)^a}{a(x)^{p-1}(\lambda X+1)^{p-1}}dx:
 1\leq \mu \leq \dim C_\nu, 0\leq a \leq \nu \right\}
\]
is a basis of the space $\bigoplus_{a=0}^{\nu}
L_{\nu,a} (C_\nu)$.
Furthermore,
proposition \ref{Va} implies that 
\[
 \bigoplus_{a=0}^{\nu}
L_{\nu,a} (C_\nu)=V_{\nu}^{\dim C_\nu}.
\]
\begin{corollary}\label{final-basis}
A  natural basis for $\bigoplus_{a=0}^{\nu}
L_{\nu,a} (C_\nu)$ with respect to the reduction is 
\begin{equation}
 \left\{f_\mu \frac{ X^a}{a(x)^{p-1}(\lambda X+1)^{p}}dx:
 1\leq \mu \leq \dim C_\nu, 1\leq a \leq \nu+1 \right\}
\end{equation}
\end{corollary}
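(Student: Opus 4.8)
The plan is to read the corollary as the transport to differentials of the reduction-adapted presentation of $V_\nu$ furnished by Proposition \ref{Va}. The essential design feature is that the denominator $a(x)^{p-1}(\lambda X+1)^{p}$ is $\sigma$-invariant: $\sigma$ fixes $x$, hence $a(x)$, and $\sigma(\lambda X+1)=\zeta(\lambda X+1)$ gives $\sigma\big((\lambda X+1)^{p}\big)=\zeta^{p}(\lambda X+1)^{p}=(\lambda X+1)^{p}$. Consequently the whole $G$-action on a differential $f_\mu X^{a}/\big(a(x)^{p-1}(\lambda X+1)^{p}\big)\,dx$ is concentrated in the numerator through $\sigma(X)=\zeta X+1$, i.e. through $X^{a}\mapsto(\zeta X+1)^{a}$, which is exactly the weight-$\zeta^{1},\dots,\zeta^{\nu+1}$ presentation \eqref{Marep} with unipotent part $A_{\nu+1}$; upon reduction $\zeta\mapsto 1$ this becomes the standard unipotent block of Proposition \ref{Vamod}. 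This is precisely what makes the $X$-power basis, and not the $(\lambda X+1)$-power basis of \eqref{1action}, adapted to the special fibre, and it is the content I would have to render explicit.

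First I would make the coordinate change explicit. From $X=\lambda^{-1}\big((\lambda X+1)-1\big)$ and a binomial expansion one gets $X^{a}=\lambda^{-a}\sum_{j=0}^{a}\binom{a}{j}(-1)^{a-j}(\lambda X+1)^{j}$, so each proposed differential is a $\mathrm{Quot}(S)$-combination of the differentials $f_\mu(\lambda X+1)^{j-p}/a(x)^{p-1}\,dx$, $0\le j\le a$; conversely the passage from the denominator $(\lambda X+1)^{p-1}$ to $(\lambda X+1)^{p}$ and from the range $0\le a\le\nu$ to $1\le a\le\nu+1$ is the bookkeeping that matches the generating system $(\lambda X+1)^{a}$, $1\le a\le\nu+1$, of $V_\nu=V_{1,\nu}$ with the weight-preserving $X$-powers $X^{a}$, $1\le a\le\nu+1$. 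Since this transformation lies in $\mathrm{GL}(\mathrm{Quot}(S))$ on each of the $\dim C_\nu$ blocks, the resulting set has the same cardinality $(\nu+1)\dim C_\nu$ and realizes the summand $V_\nu^{\dim C_\nu}$ through the presentation \eqref{Marep}.

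Next I would establish integrality and good reduction, where the real work lies. For holomorphy over $R$ I would combine Lemma \ref{agrr} and Corollary \ref{le-a-g}, which show that lowering the $(\lambda X+1)$-exponent preserves holomorphy, with the defining relation $(\lambda X+1)^{p}=1+\lambda^{p}x^{l}a(x)^{-p}$ coming from $y^{p}=\lambda^{p}x^{l}+a(x)^{p}$; the latter rewrites the differential as $f_\mu a(x)X^{a}y^{-p}\,dx$ and shows that the apparently deeper pole at the points $y=0$ introduced by the factor $(\lambda X+1)^{p}$ is spurious, as $(\lambda X+1)^{p}$ is, up to $a(x)^{-p}$, a function of $x$ alone. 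For the reduction I would specialize $\lambda\to 0$: then $(\lambda X+1)^{p}\to 1$, the operator $\lambda^{-p}\big((1+\lambda X)^{p}-1\big)$ tends to the Artin--Schreier operator $X^{p}-X$ (up to a unit), and $\sigma(X)=\zeta X+1$ reduces to $X+1$, so the differentials reduce to $\bar f_\mu X^{a}\bar a(x)^{-(p-1)}\,dx$, which I would identify with the corresponding holomorphic differentials of the special Artin--Schreier fibre and which are linearly independent.

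Finally I would assemble these facts by Lemma \ref{free-lemma}: the proposed set consists of holomorphic relative differentials, its generic span has the full rank $(\nu+1)\dim C_\nu$, and its reduction modulo $\pi R$ is linearly independent, whence it is a free $R$-basis of the summand $V_\nu^{\dim C_\nu}$ of $H^0(\mathcal{X},\Omega_\mathcal{X})$, with the matrix of $\sigma$ given by \eqref{Marep} and reducing to $A_{\nu+1}$. I expect the main obstacle to be the integrality step: controlling simultaneously the $\lambda^{-a}$ denominators produced by the change of basis and the extra factor $(\lambda X+1)$ in the denominator at the ramification points $y=0$ and at infinity, for which the curve relation $(\lambda X+1)^{p}=1+\lambda^{p}x^{l}a(x)^{-p}$ and the Artin--Schreier degeneration of the special fibre are the decisive tools.
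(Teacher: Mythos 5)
Your outline does mirror the paper's intended strategy (transport the diagonal $(\lambda X+1)$--presentation of Proposition \ref{Va} to an $X$--power presentation, get holomorphy from Lemma \ref{agrr}/Corollary \ref{le-a-g}, and certify good reduction against the Artin--Schreier fibre), but the two steps you defer as ``bookkeeping'' and ``where the real work lies'' are precisely the steps that fail --- and they fail because the set as printed cannot be a basis of $\bigoplus_{a=0}^{\nu}L_{\nu,a}(C_\nu)$ at all. First, there is no $\mathrm{GL}(\mathrm{Quot}(S))$ base change matching $\{X^{a}:1\leq a\leq \nu+1\}$ with the generating system $\{(\lambda X+1)^{b}:1\leq b\leq \nu+1\}$: your own expansion $X^{a}=\lambda^{-a}\sum_{j=0}^{a}\binom{a}{j}(-1)^{a-j}(\lambda X+1)^{j}$ contains the term $j=0$, i.e.\ $(\lambda X+1)^{0}=1$, which lies \emph{outside} the span of the $(\lambda X+1)^{b}$ with $b\geq 1$; already for $\nu=0$ the lines spanned by $X$ and by $\lambda X+1$ are different. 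For the same reason the span of the printed set is not even $\sigma$--stable ($\sigma(X^{a})=(\zeta X+1)^{a}$ has a nonzero constant term), so the claim that $\sigma$ acts on it by the matrix (\ref{Marep}) is not meaningful. Second, the holomorphy argument is a non sequitur: the fact that $(\lambda X+1)^{p}=1+\lambda^{p}x^{l}a(x)^{-p}$ is a function of $x$ does not make its zeros spurious. At a ramification point $P$ above a simple root ($\neq 0$) of $\lambda^{p}x^{l}+a(x)^{p}$ one has $v_{P}(\lambda X+1)=1$, $v_{P}(X)=0$ and $v_{P}(dx)=p-1$, so each printed differential has $v_{P}=v_{P}(f_\mu)-p+(p-1)=-1$ there (the polynomials $f_\mu$ cannot vanish at these roots). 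Concretely, for $p=3$, $q=l=1$: the curve $y^{3}=x^{3}+\lambda^{3}x$ has genus one with $H^0$ spanned by $dx/y^{2}$, while the printed differential equals $X\,dx/\bigl(x^{2}(\lambda X+1)^{3}\bigr)=(y-x)\,dx/\bigl(\lambda x(x^{2}+\lambda^{3})\bigr)$, which has simple poles at the two points above $x^{2}+\lambda^{3}=0$.

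What these failures reveal is an off-by-one error in the statement itself, which a correct blind proof would have to detect and repair: the basis that the paper's surrounding text actually certifies is
\[
\left\{f_\mu\,\frac{X^{a}}{a(x)^{p-1}(\lambda X+1)^{p-1}}\,dx=f_\mu\,\frac{(\lambda X+1)X^{a}}{a(x)^{p-1}(\lambda X+1)^{p}}\,dx\;:\;1\leq\mu\leq\dim C_\nu,\;0\leq a\leq \nu\right\}.
\]
For this set every step of your plan goes through: $(\lambda X+1)X^{a}=\lambda^{-a}\sum_{j=0}^{a}\binom{a}{j}(-1)^{a-j}(\lambda X+1)^{j+1}$ is a triangular (hence invertible) change of basis inside the span of $(\lambda X+1)^{b}$, $1\leq b\leq\nu+1$; the action $\sigma\bigl((\lambda X+1)X^{a}\bigr)=\sum_{k}\binom{a}{k}\zeta^{k+1}(\lambda X+1)X^{k}$ realizes (\ref{Marep}) with weights $\zeta,\ldots,\zeta^{\nu+1}$ and reduces to the indecomposable $A_{\nu+1}$ of Proposition \ref{Vamod}; the pole of order $p-1$ of $(\lambda X+1)^{-(p-1)}$ at the ramification points is exactly cancelled by $v_{P}(dx)=p-1$; and the reduction modulo $\pi$ is $\bar f_\mu X^{a}a(x)^{-(p-1)}dx$ with $0\leq a\leq\nu\leq p-2$, matching (via the Lemma \ref{agrr} computation and the equivalence of (\ref{ine11}) with (\ref{range-cond})) the Artin--Schreier basis $\{x^{N}a(x)^{\mu}X^{\mu}a(x)^{-(p-1)}dx\}$, whose $X$--exponents run over $0\leq\mu\leq p-2$ --- whereas your range $1\leq a\leq\nu+1$ produces $X^{p-1}$--terms, which are not holomorphic on the special fibre either (in the example above, $X\,dx/x^{2}$ has a pole of order $2$ on $X^{3}-X=x^{-2}$). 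So, as written, your proposal asserts exactly the two claims that are false for the printed index range; to become a proof it must shift the range to $0\leq a\leq\nu$, replace the denominator by $(\lambda X+1)^{p-1}$, and carry out the verifications above rather than assert them.
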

Using the decomposition in eq. (\ref{L-sum}) and eq. (\ref{dimO}) we obtain that 
\[
\dim C_{p-2}=\dim \Omega_X^{p-2}=q-1.
\]
\begin{eqnarray*}
 \dim C_{p-k}&=& \dim \Omega_X^{p-k} -\dim \Omega_X^{p-k+1} \mbox{ for } k\geq 3 \\
%&=&q+ \lf \frac{kl}{p}\rf - \lf \frac{(k-1)l}{p}\rf,
\end{eqnarray*}
So
\[
 \dim C_a=
\left\{
\begin{array}{ll}
\dim \Omega_X^a -\Omega_X^{a+1}=q + \lc \frac{(a+1)l}{p}\rc-\lc \frac{(2+a)l}{p}\rc & \mbox{ if } a \leq p-3 \\
q-1 & \mbox{ if } a=p-2
\end{array}
\right.
\]
The proof of our main theorem \ref{main} will be complete if we show that 
the differentials chosen in corollary \ref{le-a-g} are $\mathcal{X}$ holomorphic. 
For this we will study the  characteristic $p$-fibers of our family $\mathcal{X}\rightarrow \Spe R$. 
\section{On the finite characteristic fibres}
In this section we consider the reduction of the model given in eq. \ref{def2}
\[
 \big( \lambda \xi +a(x) \big)^p=\lambda^p x^l +a(x)^p,
\]
modulo the ideal generated by $\pi$. Using that for $\lambda=\zeta-1$, $p \lambda^{-j} \equiv 0 \mod \pi$ for 
$0\leq j <p-1$ and  that $p \lambda^{-(p-1)}\equiv -1 \mod \pi$ Bertin-M\'ezard in \cite[sec. 4.3]{Be-Me}
arrived to the equation:
\begin{equation} \label{AS-red} 
 X^p-X=\frac{x^l}{a(x)^p}, \mbox{ where } X=\frac{\xi}{a(x)}.
\end{equation}
This equation is  Artin-Schreier but not in normal form since the right hand side has poles of orders 
divisible by $p$.
Write (keep in mind that  $l_1=l$)
%{\color{red} I replaced $r_i$ with $l_i$}
\[
a(x)=x^{l_1} \prod_{i=2}^r p_i(x)^{l_i}, 
\]
where $p_i(x)$ are irreducible polynomials in $S[[x_1,\ldots,x_q]][x]$.
The valuations of the denominator in (\ref{AS-red}) are not prime to $p$. 
Bertin-M\'ezard proved that  the normalization of $R[[x]]$ in the Galois extension 
of the generic fibre is the ring $R[[\eta]]$, where $\eta^l=\xi$. 
The group action of the generator $\sigma$ of $G$ is then given by 
\[
 \sigma(\xi)=\xi +a(x) \Rightarrow \sigma(\eta)=\eta \left(1+\frac{a(x)}{\eta^l} \right)^{1/l}
\]
so 
\begin{eqnarray} 
 \sigma(\eta)-\eta &=&   \eta \sum_{\nu=1}^\infty \binom{\frac{1}{l}}{\nu} 
\left(\frac{a(x)}{\eta^l}\right)^\nu \nonumber \\
& =& \left(\frac{a(x)}{\eta^{l-1}}\right) \left( \sum_{\nu=1}^\infty \binom{\frac{1}{l}}{\nu} 
\left(\frac{a(x)}{\eta^l}\right)^{\nu-1} \right). \label{11-aa}
\end{eqnarray}
Notice that for $l>1$ the polynomial $a(x)$ has at least one root, and since $x=\eta^p u(\eta)$, 
where $u(\eta)$ is a unit in $R[[\eta]]$ we have that $a(x)/\eta^{l-1}$
has reduced order $pq-l+1=m+1$ and by Weierstrass preparation theorem can be written as
\[
 \left(\frac{a(x)}{\eta^{l-1}} \right)=\left(\eta^{pq-l+1} +a_{pq-l}(x_1,\ldots,x_q) \eta^{pq-l} + \cdots 
+a_0(x_1,\ldots,x_q) \right) U(\eta), 
\]
where $a_i(0,\ldots,0)=0$. According to \cite[sec. 2.1]{MatignonGreen98} the ramification locus 
corresponds to the irreducible factors  of distinguished Weierstrass polynomial that differs to $a(x)$ only by 
a unit. Therefore only the places $p_i(x)$ corresponding to factors of $a(x)$ are ramified. If $l_i$ 
is the multiplicity of the polynomial $p_i(x)$ in the decomposition of $a(x)$ then the conductor 
is given by 
\[
 m_i=\left\{
\begin{array}{ll}
 p l_i-1 & \mbox{if } i\neq 1 \\
 p l_1-l_1 & \mbox{if } i=1.
\end{array}
\right.
\]
\begin{remark}
 Notice that since $\sum l_i=q$, the contributions to the different 
\[
 \sum (p-1)(m_i+1)= (p-1)(pq-l+1)=(p-1)(m+1)
\]
are as expected \cite[sec. 3.4]{MatignonGreen98},\cite[sec. 5]{KatoDuke87}.
\end{remark}
Let $P_i$ be the unique places above the polynomials $p_i(x)$ and $P_0$ above $x$.
We  compute the divisors:
%{\color{red} the number of places that are ramified are $r$ in the Kummer extension, aren't here $r$ too?i.e. $r=s$?}
\[
 \mathrm{div}(X)=\mathrm{div}_0(X)- \sum_{i=2}^r l_ip  P_i- (l_1p-l) P_1.
\]
\begin{remark}
 Notice that in the case of normalized Artin-Schreier curves the coefficients in front of poles of 
the generating functions is just the conductor of the corresponding place see \cite[eq. 26]{boseck}.
This can not be true in our case  since the conductors are not divisible by $p$. 
\end{remark}
We compute 
%{\color{red} again $r=s$? if yes we shall replace it until the end}
\[
 \mathrm{div}(dx)=\sum_{i=2}^r (p-1)pl_iP_i+ (p-1)(l_1p-l+1)P_1 -2 \mathrm{Con}(P_\infty),
\]
where $\mathrm{Con}(P_\infty)$ is the sum of places extending $P_\infty$ in the Artin-Schreier extension, 
and 
\[
 \mathrm{div}(X^\mu dx)=\sum_{i=2}^r
%\big(
(p-1-\mu)pl_i
%\big)
 P_i
+\big(
(p-1-\mu)(l_1p-l)+(p-1)
\big)P_1
-2 \mathrm{Con}(P_\infty) +\mu \mathrm{div}_0(X).
\]
Following Boseck we now set
\[
 m_i^{(\mu)}=(p-1-\mu)l_i \mbox{ and } 
m_1^{(\mu)}=\lf \frac{(p-1-\mu)(l_1p-l)+(p-1)}{p} \rf
\]
Then we form the polynomials
\begin{eqnarray*}
 g_\mu(x) & = &\prod_{i=1}^r {p_i(x)}^{m_i^{(\mu)}}= 
\left(\frac{a(x)}{x^{l_1}}\right)^{p-1-\mu} x^{\lf \frac{(p-1-\mu)(l_1p-l)+(p-1)}{p}\rf}\\
&=& a(x)^{p-1-\mu} x^{\lf \frac{-l(p-1-\mu)+(p-1)}{p}\rf}.
\end{eqnarray*}
Therefore 
\[
 g_{\mu}(x)^{-1} X^\mu dx 
\]
is a holomorphic differential if 
\[
 t^{(\mu)}=\sum_{i=1}^r d_i m^{(\mu)}_i=q(p-1-\mu)-l +\lf \frac{l(1+\mu)+p-1}{p}\rf \geq 2.
\]
Notice that for $\mu=p-1$ the above formula gives $t^{(p-1)}=0$ so $\mu=p-1$ is not 
permitted and $0\leq \mu \leq p-2$. 
By computation we see
\[
 \left\{x^{\nu}g_\mu(x)^{-1} X^\mu dx, \;\; \mu=0,\ldots,p-1, t^{(\mu)} \geq 2,\nu=0,\ldots,t^{(\mu)}-2
\right\}
\]
is  a basis of holomorphic differentials. 
We set $N:=\nu-{\lf \frac{-l(p-1-\mu)+(p-1)}{p}\rf}$ 
% {\color{red}I think the right one is.. We set $N:=\nu-{\lf \frac{-l(p-1-\mu)+(p-1)}{p}\rf}$ }
and we observe that 
\begin{equation} \label{ine11}  
l-\lf \frac{l(1+\mu)+(p-1)}{p}\rf \leq  N\leq q (p-1-\mu)-2.
\end{equation}
Since for every integer $a$ we have $\lf \frac{a+p-1}{p}\rf =\lc \frac{a}{p}\rc$ we 
see that the inequality (\ref{ine11})  is equivalent to (\ref{range-cond}) and the set    
\[
 \left\{x^{N} a(x)^\mu \frac{X^\mu}{a(x)^{p-1}} dx
\right\}
\]
with $N$ satisfying (\ref{ine11}) and $0\leq \mu \leq p-2$ 
is the reduction of the set given in (\ref{final-basis}).
This proves that the union of the  bases given  (\ref{final-basis}) is  not just 
a basis  of holomorphic differentials on the characteristic zero fibers but 
forms a free basis of the module of relative differentials $H^0(\Omega_{\mathcal{X}},\mathcal{X})$.

% %                
% \section{Generic Fibre}
% %
% We have proved that 
% \[
%  \Omega_X:=\bigoplus_{i=1}^s (N_{i+1}-N_i) W_{i,N_i}.
% \]
% Every $W_{i,N_i}$ is decomposed to one dimensional eigenspaces 
% of the eigenvalue $\zeta^\nu$ if $1\leq \nu \leq i$. 
% By eq. (\ref{y-act}) we see that $\zeta^\mu$ is an eigenvalue of the action 
% of $\sigma$ in $W_i$  if and only if $\mu-1 <i$.
% Therefore the dimension of the eigenspace of the eigenvalue $\zeta^\mu$
% is given by:
% \begin{eqnarray*}
%  \sum_{\mu-1 \leq i}(N_{i+1}-N_i) &= & -2-N_{\mu-1}\\
%  &= & -2 +m -\mu q - \lf \frac{\mu l}{p} \rf.
% \end{eqnarray*}
% This result coinsides with Galois module structure formula proved by Hurwitz:

%
%

\subsection{On a theorem of Hurwitz}
The following is a classical result due to Hurwitz \cite{Hu:1893} (see also \cite[Theorem 3.5 p. 600]{MorrisonPinkham})
 that characterizes the dimension of the 
$\zeta^i$ eigenvalues of the generator of a  $p$-cyclic group on the space of holomorphic differentials. 
\begin{theorem}[Hurwitz]\label{cyclic prime to p}
 %\footnote{here the regular representation occurs $d_0=g_E=d_n$ times and each
%$k^{th}$ irreducible representation is contained one time (``with multiplicity
%one'') in the regular representation, This is the reason we add the genus in the
%above formula}
Let $F/E$ be a cyclic Galois extension of function fields with Galois group
$C_n$, $(p,n)=1$. 
This is given in Kummer form $y^n=u$, where $u\in E$ and $u^l \not\in E$ for
every $l\mid n$.
For every place $P$ of $E$ we set  $\Phi(i)=v_{P_i}(y) =\frac{e_i
v_{\bar{P}_i}(u)}{n}$. Set 
%{\color{red} I erased the units in $\Gamma$'s}
\[
 \Gamma_k:=\sum_{i=1}^r\left\langle \frac{k\Phi(i)}{e_i}\right\rangle.
\]
For $k=0,\ldots, n -1$, we have $n$ distinct  irreducible representations of
degree $1$. 
The $k$th representation occurs $d_{k}=\Gamma_{n-k} -1+g_E$ times in the
representation of $G$ 
in $\Omega_F$, when $k\neq 0$ and $g_E$ times when $k=0$.
\end{theorem}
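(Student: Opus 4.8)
The plan is to exploit that, since $(p,n)=1$ and the base field contains the $n$-th roots of unity, the group algebra of $C_n$ is semisimple and splits into the $n$ distinct characters $\chi_k\colon\sigma\mapsto\zeta^k$; hence $\Omega_F=\bigoplus_{k=0}^{n-1}\Omega_F^{(k)}$ decomposes into eigenspaces and $d_k=\dim_k\Omega_F^{(k)}$ is exactly the multiplicity to be computed. First I would normalise the generator so that $\sigma(y)=\zeta y$ and observe that, because $\sigma$ fixes $x\in E$, every holomorphic differential in the eigenspace attached to $\chi_k$ is written uniquely as $\omega=g\,y^{-j}\,dx$ with $g\in E$ and $j\equiv n-k \pmod n$. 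Thus the whole problem reduces to counting, for each $j$, the functions $g\in E$ for which $g\,y^{-j}\,dx$ is holomorphic on the cover.

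Next I would translate holomorphy into a Riemann--Roch condition on the base curve $X$ of $E$. Using tameness, at a place $\bar P$ of $E$ with ramification index $e$ the different exponent is $e-1$, so for the pulled-back differential one has $v_P(dx)=e\,v_{\bar P}(dx)+(e-1)$, while $v_P(y^{-j})=-j\Phi$. Combining these at every place and using $v_P(g)=e\,v_{\bar P}(g)$ for $g\in E$, the holomorphy inequalities collapse, via the identity $\lc (j\Phi+1)/e\rc=\lf j\Phi/e\rf+1$, to $v_{\bar P}(g)\ge \lf j\Phi/e\rf-v_{\bar P}(dx)$. Hence $g$ ranges exactly over $L(D_j)$ with $D_j=K_E-\sum_{\bar P}\lf j\Phi/e\rf\,\bar P$, where $K_E=\mathrm{div}(dx)$ is a canonical divisor of $E$, and $d_k=\ell(D_j)$.

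Then I would apply Riemann--Roch on $X$. The key simplification is the global identity $\sum_{\bar P} j\Phi/e=(j/n)\deg\mathrm{div}_E(u)=0$, which turns the floor-sum into minus the fractional-part sum, $\sum_{\bar P}\lf j\Phi/e\rf=-\Gamma_j$ (unramified places contribute nothing, as there $j\Phi/e$ is an integer). Consequently $\deg D_j=2g_E-2+\Gamma_j$ and $\deg(K_E-D_j)=-\Gamma_j$. For $k\ne 0$ the Kummer nondegeneracy hypothesis ($u\notin(E^*)^l$ for every $l\mid n$, $l>1$) forces $\Gamma_j>0$: otherwise $\mathrm{div}_E(u^j)$ would be $n$ times a divisor, giving $u\in(E^*)^{n/\gcd(j,n)}$, a contradiction. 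Therefore $\deg(K_E-D_j)<0$, the speciality term $\ell(K_E-D_j)$ vanishes, and Riemann--Roch yields $d_k=\deg D_j+1-g_E=\Gamma_{n-k}-1+g_E$. For $k=0$ one has $j=0$, $D_0=K_E$, and $d_0=\ell(K_E)=g_E$, which is precisely the excluded special case.

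I expect the \emph{main obstacle} to be the vanishing of the speciality term $\ell(K_E-D_j)$ for $k\ne0$: this is exactly where the Kummer nondegeneracy of $u$ must be invoked, and it is what distinguishes the generic eigenspaces from the trivial one (where the constant function already produces a section). The remaining work --- the tame valuation bookkeeping and the floor/fractional-part identity --- is routine once the reduction to $L(D_j)$ is in place. I would also remark that over a non-algebraically closed constant field the residue degrees and the numbers $f_i,g_i$ of places reappear as weights in the definition of $\Gamma_k$, but over the algebraically closed field of the present paper all of these equal $1$, so the sum of fractional parts is exactly $\Gamma_k$ as stated.
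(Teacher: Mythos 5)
Your route is the classical Riemann--Roch proof, and it is genuinely different from what the paper does: the paper never proves Theorem \ref{cyclic prime to p} at all, it cites Hurwitz and Morrison--Pinkham and merely re-derives the eigenspace dimensions in its own special case ($E=L(x)$, $g_E=0$) from the integral decomposition of Theorem \ref{main}. Most of your steps are correct: the semisimple eigenspace decomposition, the identification of the $k$-th eigenspace with $L(D_j)$ for $D_j=K_E-\sum_{\bar P}\lfloor j\Phi/e\rfloor\,\bar P$ and $j\equiv n-k \pmod n$ (the tame different exponent $e-1$ and the ceiling/floor identity are handled correctly), and the degree count $\deg D_j=2g_E-2+\Gamma_j$. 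The genuine gap is exactly at the step you yourself single out: the vanishing of $\ell(K_E-D_j)$. You argue that nondegeneracy of $u$ forces $\Gamma_j>0$ for $j\neq 0$, ``otherwise $\mathrm{div}_E(u^j)$ would be $n$ times a divisor, giving $u\in(E^*)^{n/\gcd(j,n)}$.'' That implication is false: a function whose divisor is divisible by $n$ in the divisor group need not be (a constant times) an $n$-th power of a function; the obstruction is $n$-torsion in the divisor class group of $E$, which is nontrivial as soon as $g_E\geq 1$. Concretely, let $E=k(s,t)$ be an elliptic function field, $p\neq 2$, with $t^2=(s-a_1)(s-a_2)(s-a_3)$, and take $n=2$, $u=s-a_1$. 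Then $\mathrm{div}_E(u)=2(T)-2(O)$ with $T\neq O$ a $2$-torsion point, and $u$ is not a square in $E$ (otherwise $(T)-(O)$ would be principal), so all hypotheses of the theorem hold and $F/E$ is the everywhere-unramified double cover; yet $\Gamma_1=0$ and $\deg(K_E-D_1)=0$, so your degree argument cannot conclude. (The theorem itself survives: $d_1=\Gamma_1-1+g_E=0$, consistent with $g_F=1$; but it is your proof, not the statement, that breaks.)

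The repair is to prove $\ell(K_E-D_j)=0$ for $1\leq j\leq n-1$ directly, rather than through the degree. If $0\neq h\in L(K_E-D_j)=L\bigl(\sum_{\bar P}\lfloor j\Phi/e\rfloor\,\bar P\bigr)$, then at every place of $E$ one has
\[
v_{\bar P}\left(h^n u^j\right)=n\left(v_{\bar P}(h)+\frac{j\Phi}{e}\right)\geq n\left(v_{\bar P}(h)+\left\lfloor\frac{j\Phi}{e}\right\rfloor\right)\geq 0,
\]
so $h^n u^j$ has no poles and is a constant $c$. Writing $d=\gcd(j,n)$ and choosing integers $a,b$ with $aj+bn=d$, you get $u^d=c^a\bigl(h^{-a}u^b\bigr)^n$; since the constant field is algebraically closed you may absorb roots of constants and conclude that $u$ is an $(n/d)$-th power in $E$, where $1<n/d$ and $n/d\mid n$, contradicting the Kummer nondegeneracy hypothesis. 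This is where that hypothesis genuinely enters, and it covers uniformly both the case $\deg(K_E-D_j)<0$ and the degree-zero, nonprincipal case above; the rest of your argument, including $d_0=\ell(K_E)=g_E$, then goes through verbatim. Note that your original argument \emph{is} valid when $g_E=0$ (the class group of $\mathbb{P}^1$ is torsion free), so it would suffice for the cross-check carried out in this paper, but not for the theorem as stated, whose formula involves arbitrary $g_E$.
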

In our case we have $\Phi(i)=1$ for all places that correspond to divisors of
$\lambda^p+x^m$ and 
$\Phi(r)=l$ for the last place.
We compute
%{\color{red} I erased the units in $\Gamma$'s}
\begin{eqnarray*}
 \Gamma_k &=&\sum_{i=1}^r \left\langle \frac{k\Phi(i)}{p}\right\rangle \\
 &=& m\left( \frac{k}{p}-  \lf \frac{k}{p} \rf\right) + \frac{kl}{p}-  \lf
\frac{kl}{p} \rf\\
 &=&  k\frac{m+l}{p}- \lf \frac{kl}{p} \rf \\
 &=& kq- \lf \frac{kl}{p} \rf.
\end{eqnarray*}
The dimension of the eigenspace of the eigenvalue $\zeta^\mu$ is
$\Gamma_{p-\mu}-1$.
We compute:
\begin{equation} \label{hur}
{\Gamma_{p-\mu}-1}  =  (p-\mu)q -  \lf \frac{(p-\mu)l}{p} \rf-1.
% & =& pq -\mu q-l - \lc \frac{\mu l}{p}\rc -1\\
% &=& m -\mu q + \lc \frac{\mu l}{p}\rc -1\\
% &=& m-\mu q +\lf \frac{\mu l}{p}\rf -2.
\end{equation}
We would like to obtain the result of Hurwitz using our result on integral representation.
Observe that the action of $\sigma$ on $V_{n}$ has eigenvalues
$\zeta,\zeta^2,\ldots,\zeta^{n+1}$ thus there is contribution from $V_{n}$ to the 
eigenspace of the eigenvalue $\zeta^\mu$  only if $n+1 \geq \mu$.
 Therefore only the summands for which $n \geq \mu-1$ contribute $\dim C_n$ to the 
eigenspace. The dimension of the desired eigenspace is:
\begin{eqnarray*}
 \sum_{n=\mu-1}^{p-2} \dim C_n & = & \sum_{n=\mu-1} ^{p-3} \dim C_n + \dim C_{p-2}\\
&=&q(p-\mu-1)+\lc \frac{\mu l}{p} \rc - \lc \frac{(p-1)l}{p}\rc +q-1\\
&=& q(p-\mu)+\lc \frac{\mu l}{p}\rc -l -1.
\end{eqnarray*}
Of course this result coincides with the result given by eq. (\ref{hur}). 

 \def\cprime{$'$}
\providecommand{\bysame}{\leavevmode\hbox to3em{\hrulefill}\thinspace}
\providecommand{\MR}{\relax\ifhmode\unskip\space\fi MR }
% \MRhref is called by the amsart/book/proc definition of \MR.
\providecommand{\MRhref}[2]{%
  \href{http://www.ams.org/mathscinet-getitem?mr=#1}{#2}
}
\providecommand{\href}[2]{#2}

%\bibliography{bib.bib}

\end{document}